\newtheorem{lemma}{Lemma}
\newtheorem{theorem}{Theorem}
\newtheorem{proposition}{Proposition}
\theoremstyle{definition}\newtheorem{remark}{Remark}}
\theoremstyle{definition}\newtheorem{assumption}{Assumption}}
\numberwithin{lemma}{section}
\numberwithin{proposition}{section}
\newcommand{\R}{\mathbf{R}}
\newcommand{\Q}{\mathbf{Q}}
\newcommand{\T}{\mathbf{T}}
\newcommand{\Z}{\mathbf{Z}}
\newcommand{\N}{\mathbf{N}}
\newcommand{\supp}{\textnormal{supp }}
\newcommand{\Span}{\textnormal{Span}}
\title[Hopf-Like Bifurcation at a Removable Singularity]{Hopf-Like Bifurcation in a Wave Equation at a Removable Singularity}
\author[N. Kosovali\'c]{Nemanja Kosovali\'c}
\address{Shopify (formerly University of South Alabama)\\ n.kosovalic@gmail.com}
\author[B. Pigott]{Brian Pigott}
\address{Wofford College\\pigottbj@wofford.edu}
\date{\today}
\subjclass{(Primary) (Secondary) }
\keywords{damped wave equation, small amplitude time periodic solution, removable singularity, bifurcation, viscoelastic damping}
\begin{document}

\begin{abstract}
It is shown that a one-dimensional damped wave equation with an odd time derivative nonlinearity exhibits small amplitude bifurcating time periodic solutions, when the bifurcation parameter is the linear damping coefficient is positive and accumulates to zero. The upshot is that the singularity of the linearized operator at criticality which stems from the well known small divisor problem for the wave operator, is entirely removed without the need to exclude parameters via Diophantine conditions, nor the use of accelerated convergence schemes. Only the contraction mapping principle is used. 
\end{abstract}

\maketitle

\section{Introduction}

Time periodic solutions of nonlinear wave equations, have
been a large focus of research spanning the last fifty years,
with major interest in non-dissipative systems, or dissipative systems
subject to a periodic forcing. In the last thirty years, the emphasis
of many works addressing the bifurcation of time periodic solutions of nonlinear wave equations 
is the resolution of small divisor problems. For instance see
\cite{bourg,bourgain1995construction} and
\cite{berti2010sobolev,berti2014kam}, to name a few. In this paper, we consider the problem of bifurcating small amplitude time periodic solutions for nonlinear wave equations when the (positive) linear damping coefficient is used as the bifurcation parameter, and is allowed to be {\textit{arbitrarily small}}. In order for nontrivial time periodic solutions to persist, we consider a nonlinear negative damping as in, for instance, the investigation of the Beam equation in \cite{kosovalic2018quasi}, to balance out the positive linear damping. The linearized problem at criticality (when the damping coefficient is zero) naturally contains a singularity owing to the small divisor problem stemming from the wave operator. The upshot of this work is that the singularity can be entirely removed {\textit{without}} the use of techniques such as Nash-Moser iteration, nor the exclusion of parameters via Diophantine conditions. In order to remove the singularity, we exploit three critical features of our problem: (i) An asymptotic expansion for the lower bound of the damping parameter in terms of amplitude, (ii) the fact a certain integral vanishes (see Lemma \ref{large_k_zeros}), and (iii) the regularizing effect of Laplacian damping. This allows us to solve the infinite dimensional part of the problem using merely the contraction mapping principle. Our work is motivated by \cite{bambusi2000lyapunov} which proves Lyapunov Center Theorems for various nonlinear wave equations using the contraction mapping principle but differs in two ways: We have an explicit bifurcation parameter, so we are really proving a Hopf Bifurcation Theorem and not a Lyapunov Center Theorem, and we do not require any Diophantine conditions to restrict parameters.

We consider the following damped nonlinear wave equation with Dirichlet boundary conditions:

\begin{equation}
\label{dnlw}
\left \{ \begin{array}{ll}
\partial_{t}^{2} u - \partial_{x}^{2}u - \alpha \partial_{t} \partial_{x}^{2} u + mu = \Big ( \partial_{t} u \Big )^{2p+1}, & t \in \R, x \in \T = \R/2\pi \Z,\\
u(t,0) = u(t,\pi) = 0.
\end{array}
\right .
\end{equation}

In \eqref{dnlw} $p \geq 1$ is taken to be an integer, $\alpha > 0$ and $m > 0$ is irrational. 

\smallskip

Before stating the main results we make some clarifying remarks concerning our choices of the nonlinearity, damping, and the question of the stability of the obtained periodic solutions:

\begin{enumerate}
    \item \textit{{Nonlinearity}} - Due to the presence of positive linear damping, in order for non-trivial periodic solutions to persist, we need the nonlinearity to restore energy to the system, and the odd time derivative nonlinearity is the simplest autonomous mechanism which achieves this. See the energy argument in \cite{kosovalic2018quasi} for details. Such velocity dependent nonlinearities arise in the phenomena of self-excited vibrations; see the introduction of \cite{kosovalic2018quasi} and the references therein. In fact, our results can be extended to include higher order polynomial perturbations on top of the present nonlinearity, which do not contain any derivatives. On a related note the Lemma \ref{large_k_zeros} which concerns the vanishing of a certain integral, is really just another way of stating that monomial expansions of trigonometric functions have finitely many nonzero Fourier modes, and has nothing special to do with our choice of nonlinearity. 
    \item  \textit{{Damping}} - The choice of Laplacian damping (sometimes called Kelvin-Voigt damping) in this work is used to facilitate removing the singularity. It is {\textit{not}} clear if our results can be obtained for non-Laplacian linear damping using the same techniques. On the other hand, our choice of damping  naturally arises in modelling the vibrations of viscoelastic materials, as an internal friction of the vibrating material. See e.g. the introduction of \cite{ammari2020stabilization} for a discussion of damping mechanisms in wave equations.
    \item \textit{{Stability}} - We do {\textit{not}} obtain any results concerning the stability of the periodic solutions here, but can comment on the direction of bifurcation. Although the solutions we obtain persist for arbitrarily small positive values of the damping coefficient, a change of variable $t\to -t$ allows us to get solutions for negative values of the damping coefficient for the nonlinearity $-1 \cdot  (\partial_{t} u)^{2p+1}$. Provided the principle of exchange of stability holds (see Section I.12 of the monograph \cite{kielhofer2006bifurcation}), then we expect the latter branch of periodic solutions to be stable. 
\end{enumerate}

\subsection{Statement of Results}

Our main result is 
\begin{theorem}[Small Damping Bifurcation]  \label{maintheorem}
Let $k_0\ge 2$ be given. Then there is some $M=M(p,m,k_0)>0$ so that for each $\rho \in(0,M)$ there is some $\alpha \gtrsim_{m,p,k_0} \rho^{2p}$ and $\omega \approx \sqrt{1+m}$ so that the nonlinear wave equation \eqref{dnlw} has a $C^{k_0}$ in $(t,x)$ $\frac{2\pi}{\omega}$-time periodic solution of the form $$u(t,x)=\rho \cos(\omega t)\sin(x) +v(\omega t,x)$$ where $(t,x)\mapsto v(t,x)$ is a $2\pi$-time periodic function, and $v(t,x) = O(\rho^{2p+\frac{1}{2}})$ with respect to a suitable Sobolev norm. 
\end{theorem}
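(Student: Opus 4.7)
The plan is a Lyapunov–Schmidt reduction in which the infinite-dimensional range equation is solved by Banach's fixed-point theorem. Rescaling $\tau = \omega t$, the PDE becomes $Lu = \omega^{2p+1}(\partial_\tau u)^{2p+1}$ with
\[
L := \omega^2\partial_\tau^2 - \partial_x^2 - \alpha\omega\partial_\tau\partial_x^2 + m,
\]
which in the basis $\{e^{ik\tau}\sin(jx)\}_{k\in\Z,\,j\ge 1}$ acts as the Fourier multiplier $\lambda_{k,j}(\omega,\alpha) := -\omega^2 k^2 + j^2 + m + i\alpha\omega k j^2$. At criticality $(\alpha,\omega^2) = (0,1+m)$ the kernel is $\Span\{\cos\tau\sin x,\sin\tau\sin x\}$, with spectral projection $\Pproj$ and complement $\Qproj := I - \Pproj$. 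I would make the ansatz $u = u_0 + v$ with $u_0 := \rho\cos\tau\sin x$ and $\Qproj v = v$, splitting the equation into a two-dimensional bifurcation equation $\Pproj L u_0 = \omega^{2p+1}\Pproj(\partial_\tau u)^{2p+1}$ and an infinite-dimensional range equation $\Qproj Lv = \omega^{2p+1}\Qproj(\partial_\tau u)^{2p+1}$.

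The bifurcation equation is the scalar system obtained from $\Pproj L u_0 = (1+m-\omega^2)\rho\cos\tau\sin x - \alpha\omega\rho\sin\tau\sin x$ together with the kernel projection $\Pproj[(\partial_\tau u_0)^{2p+1}] = -a_p\rho^{2p+1}\sin\tau\sin x$, where $a_p > 0$ is an explicit constant extracted from the Fourier expansion of $\sin^{2p+1}\tau\sin^{2p+1}x$. Matching the $\cos\tau\sin x$ and $\sin\tau\sin x$ components and treating the contribution of $v$ as a controlled perturbation, the implicit function theorem with $\rho$ as the parameter produces $\omega^2 = 1 + m + O(\rho^{2p+1/2})$ and $\alpha = a_p\omega^{2p}\rho^{2p}\bigl(1+O(\rho^{1/2})\bigr)$, which is precisely the asymptotic $\alpha \gtrsim \rho^{2p}$ and $\omega \approx \sqrt{1+m}$ claimed.

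The heart of the argument is the range equation, which I would solve by Banach contraction on a ball of radius $\sim \rho^{2p+1/2}$ in a Sobolev space $H^{k_0+2}(\T\times[0,\pi])$. The operator $\Qproj L^{-1}$ is the multiplier $1/\lambda_{k,j}$, and I would control it in three regimes. On the finite set of modes in the support of $(\partial_\tau u_0)^{2p+1}$ distinct from $(\pm 1, 1)$, the divisor $|\lambda_{k,j}|$ is bounded below by a positive constant depending only on $m,p$, since the irrationality of $m$ rules out exact resonance among finitely many candidates. For $(k,j)$ with large $|k|$ where the real part of $\lambda_{k,j}$ could be small, Lemma \ref{large_k_zeros} forces the corresponding Fourier coefficient of the source to vanish, so the troublesome divisor is never activated. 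On the remaining modes with $k\ne 0$, the Laplacian damping provides $|\lambda_{k,j}|^{-1} \le (\alpha\omega|k|j^2)^{-1}$, a gain of two spatial derivatives at the cost of a factor $\alpha^{-1} \lesssim \rho^{-2p}$. The principal obstacle is to close the contraction uniformly as $\rho \to 0$: a naive bound on the perturbative piece produces terms of order $\rho^{2p}\|v\|/\alpha \sim \|v\|$ that sit on the boundary of contraction, and the actual estimate requires combining the Sobolev gain from the Laplacian damping with the vanishing supplied by Lemma \ref{large_k_zeros} and the $\rho^{1/2}$ slack in the ball radius. Once $v$ is obtained as a smooth function of $(\rho,\omega,\alpha)$ with the required size, inserting back into the bifurcation equation and applying the implicit function theorem closes the argument and produces the $C^{k_0}$ regularity in $(t,x)$ from the choice of Sobolev exponent.
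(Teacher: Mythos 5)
Your overall architecture coincides with the paper's: Lyapunov--Schmidt reduction around $\omega_1 = \sqrt{1+m}$, solve the range equation by Banach contraction on a ball of radius $\sim \rho^{2p+\frac{1}{2}}$, then solve the two scalar bifurcation equations to pin down $\alpha$ and $\omega$. Your three-regime analysis of the divisor $\lambda_{k,j}$ and the explicit asymptotics $\alpha \sim \theta\omega^{2p}\rho^{2p}$, $\omega^2 = 1+m + O(\rho^{2p+\frac{1}{2}})$ are exactly what the paper derives.

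Where you are imprecise is on what actually closes the contraction, which you yourself flag as ``the principal obstacle.'' You say the fix combines the Laplacian gain, Lemma \ref{large_k_zeros}, and the $\rho^{1/2}$ slack, but those last two ingredients only control $\mathcal{A}(0)$ (so that $\mathcal{A}$ maps the ball into itself); they cannot help with $D\mathcal{A}$, because once $v\neq 0$ the source $(\partial_t u_0 + \partial_t v)^{2p}\partial_t\zeta$ no longer has a finitely supported spatial Fourier transform and the vanishing lemma simply does not apply. What the paper does instead (Lemma \ref{invlemma}, eq.~\eqref{invlemmaeq4}) is split by \emph{spatial} frequency at a cutoff $K$: for spatial modes $k>K$ the Laplacian damping gives $\|L_{\omega,\alpha}^{-1}\Pi_{k>K}\| \lesssim K^{-2}\rho^{-2p} + 2$, so multiplying by the $\rho^{2p}$-sized nonlinear prefactor yields $K^{-2} + O(\rho^{2p})$ --- strictly below $1$ once $K$ is large and $\rho$ small --- while for $k\le K$ the divisor is bounded away from zero \emph{uniformly in $\rho$} by irrationality of $m$ (Lemmas \ref{QRbound_small_k}, \ref{small_inv_lemma}), so that piece contributes only $O(\rho^{2p})$. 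This is the extra factor your ``$\rho^{2p}/\alpha\sim 1$'' heuristic is missing, and without articulating it the contraction remains borderline.

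A second, smaller point: you propose to finish the bifurcation equations by the implicit function theorem, but the Jacobian in $(\alpha,\omega^2)$ is degenerate at $\rho=0$ (the $\alpha$-equation is $\alpha = \theta\omega^{2p}\rho^{2p}(1+o(1))$), and $v(\rho,\omega,\alpha)$ is only defined on the slab $\alpha \gtrsim \rho^{2p}>0$, so an off-the-shelf IFT at the trivial solution does not apply. The paper instead proves the map $\alpha\mapsto\mathcal{B}(\alpha)$ is a contraction on an explicit interval $I\sim[\tfrac12 W_0^{2p}\theta\rho^{2p},\ W_1^{2p}\theta\rho^{2p}+C\rho^{2p+1}]$ (using the derivative bound of Proposition \ref{est_der_rangeeq}), and then solves for $\omega^2$ by Brouwer on $[W_0^2,W_1^2]$ using continuity in $\omega$. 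Your asymptotic formulas are right, but the existence argument needs this rescaled fixed-point form, not a direct IFT.
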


\subsection{Organization}
The paper is organized as follows. In Section \ref{SectionNotation} we introduce the basic notation that will be used throughout the paper as well as the Hilbert space $X^{s}$ in which we will work. We discuss the periodic solution ansatz and develop some key properties of the linearized operator in Section \ref{SectionAnsatz}. In Section \ref{SectionLSReduction} we introduce the Lyapunov-Schmidt decomposition, solve the range equation and the bifurcation equations, and prove Theorem \ref{maintheorem}. 

\section{Notation}
\label{SectionNotation}
We begin by introducing some notation that we use throughout the paper:

\begin{itemize}
	\item $A \lesssim B$ means that there is a universal constant $C$ so that $A \leq CB$. In cases where we wish to indicate that the implicit constant $C$ depends on a variable $q$ we will write $A \lesssim_{q} B$; that is, $A \lesssim_{q} B$ means $A \leq C(q) B$.
	\item $A \sim B$ means that $A \lesssim B$ and $B \lesssim A$.
	\item $x \approx y$ means that there is an $\epsilon > 0$ for which $\vert x - y \vert < \epsilon$.
	\item $\displaystyle \mathcal{L}(X,Y)$ is the set of bounded linear maps from $X \to Y$ (where $X$ and $Y$ are Banach spaces).
	\item $\mathscr{B}_{X}(x,r)$ denotes the ball of radius $r > 0$ centered at the point $x$ in the metric space $X$. When the space $X$ is clear from the context, we will write $\mathscr{B}(x,r)$.
 \item We use the symbol $D$ to denote the Fr\'echet derivative for mappings between Banach spaces.
\end{itemize}

\subsection{The Hilbert Space $X^{s}$}

We recall the definition of the space $X^{s}$ that was introduced in \cite{kosovalic2018self}. The space $X^{s}$ is the subset of $H^{s}(\T^{2})$ consisting of all real-valued functions that can be written in the form
\begin{equation*}
u(t,x) = \sum_{\substack{n \in \Z\\ k \geq 1}} c_{n,k} e^{int} \sin(kx).
\end{equation*}
That is,
\begin{equation}
\label{Xsdefn}
X^{s} := \left \{ \sum_{\substack{n \in \Z\\ k \geq 1}} c_{n,k} e^{int} \sin(kx) \  \Big \vert  \ \overline{c_{n,k}} = c_{-n,k}, \ \sum_{\substack{n \in \Z\\ k \geq 1}} (n^{2s} + k^{2s}) \vert c_{n,k} \vert^{2} < \infty \right   \}.
\end{equation}
We recall that if $s \geq 2$, then $X^{s}$ is a closed subspace of $H^{s}(\T^{2})$ whose members are real-valued continuous functions. In fact, the space $X^{s}$ is a real Hilbert space whose inner product is given by
\begin{equation*}
\langle u, v \rangle_{X^{s}} = \int_{\T^{2}} D_{t}^{s} u(t,x) D_{t}^{s} v(t,x) dt dx + \int_{\T^{2}} D_{x}^{s} u(t,x) D_{x}^{s} v(t,x) dt dx
\end{equation*}
with corresponding norm given by $\| u \|_{X^{s}}^{s} = \langle u, u \rangle_{X^{s}}$. We point out that this norm is equivalent to
\begin{equation*}
\sum_{\substack{n \in \Z\\ k \geq 1}} (n^{2s} + k^{2s}) \vert c_{n,k} \vert^{2}.
\end{equation*}
As $k \geq 1$ in the index of summation in the definition of $X^{s}$ and its norm, we find that this norm is also equivalent to the norm in $H^{s}(\T^{2})$ for members of $X^{s}$.

We point out that if $u \in X^{s}$, then $u(t,x)$ can be written in the form
\begin{equation*}
u(t,x) = \sum_{k \geq 1} c_{k} \sin(kx) + \sum_{\substack{n \geq 1\\ k \geq 1}} \Big ( d_{n,k} \cos(nt) \sin(kx) + e_{n,k} \sin(nt) \sin(kx) \Big ),
\end{equation*}
where the Fourier coefficients $c_{k}, d_{n,k}, e_{n,k} \in \R$.

\subsubsection{Sobolev Embedding and Banach Algebra}
\label{cmpt_emb}

With this notation, we have the following Sobolev embedding theorem: If $k \ge 0$
and $s > k + 1$ then $H^s(\T^2)\subset C^k(\T^2)$ and $ \| u \|_{C^k}\lesssim\| u  \|_{H^s}$ and hence the same holds for $X^s$ in place of $H^s$. Further if $s>1$ then $H^s(\T^2)$ is a Banach algebra via multiplication of functions, which is inherited by space $X^{s}$. That is, for $v,w\in X^s$  
\begin{equation}\label{algebra}
\|vw\|_{X^{s}}\le 2^{2s} \|v\|_{X^{s}}\|w\|_{X^{s}}.
\end{equation}

\section{Motivating Calculations and Periodic Solution Ansatz}
\label{SectionAnsatz}

We seek a solution of \eqref{dnlw} which is periodic in $t$. As in \cite{kosovalic2018self, nosymm2019, kospigott_highd_sym} we begin with the ansatz for the Lyapunov-Schmidt reduction in $X^{s}$. To find a $\frac{2\pi}{\omega}$-periodic solution of \eqref{dnlw} it suffices to find a $2\pi$-periodic solution of the following problem:
\begin{equation}
\label{omegadnlw}
\left \{
\begin{array}{ll}
\omega^{2} \partial_{t}^{2} u^{\omega} - \partial_{x}^{2} u^{\omega}  -\omega \alpha \partial_{t}\partial_{x}^{2} u^{\omega} + mu^{\omega} = \omega^{2p+1} \big ( \partial_{t} u^{\omega} \big )^{2p+1}\\
u^{\omega}(t,0) = u^{\omega}(t,\pi) = 0
\end{array}
\right .
\end{equation}
(We require, of course, that $\omega > 0$.)
In the sequel we will omit the $\omega$ superscript and write $u$ for the solution of \eqref{omegadnlw}.

That having been said in order to prove Theorem \ref{maintheorem}, it suffices to prove the following auxiliary result:

\begin{theorem}[Small Damping Bifurcation]  \label{aux_maintheorem}
Let $k_0\ge 2$ be given. Then there is some $M=M(p,m,k_0)>0$ so that for each $\rho \in(0,M)$ there is some $\alpha \gtrsim_{m,p,k_0} \rho^{2p}$ and $\omega \approx \sqrt{1+m}$ so that \eqref{omegadnlw} has a $C^{k_0}$ in $(t,x)$ $2\pi$-time periodic solution of the form $$u(t,x)=\rho \cos(t)\sin(x) +O(\rho^{2p+\frac{1}{2}}).$$
\end{theorem}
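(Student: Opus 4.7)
Write \eqref{omegadnlw} as $L_{\omega,\alpha} u = N_\omega(u)$ with $L_{\omega,\alpha} = \omega^2\partial_t^2 - \partial_x^2 + m - \omega\alpha\partial_t\partial_x^2$ and $N_\omega(u) = \omega^{2p+1}(\partial_t u)^{2p+1}$. On the Fourier basis $\{e^{int}\sin(kx)\}$ the symbol of $L_{\omega,\alpha}$ is $k^2 - \omega^2 n^2 + m + i\omega\alpha nk^2$. At the critical value $\omega = \omega_0 := \sqrt{1+m}$ and $\alpha=0$, this vanishes only at $(n,k) = (\pm 1, 1)$, since for $n\neq\pm 1$ the vanishing would force $m = (k^2 - n^2)/(n^2 - 1) \in \Q$. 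Hence the kernel of $L_{\omega_0,0}$ in $X^s$ is the two-dimensional span of $\cos(t)\sin(x)$ and $\sin(t)\sin(x)$; let $P$ be orthogonal projection onto this kernel and $Q = I - P$. Using the autonomous $t$-translation symmetry of \eqref{omegadnlw} to fix the phase (annihilating the $\sin(t)\sin(x)$-component of $u$), I look for solutions $u = \rho\cos(t)\sin(x) + v$ with $Pv = 0$. This splits the problem into the range equation $QL_{\omega,\alpha}v = QN_\omega(\rho\cos(t)\sin(x) + v)$ and two scalar bifurcation equations obtained by pairing with $\cos(t)\sin(x)$ and $\sin(t)\sin(x)$.

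To solve the range equation for $v$, I would apply the contraction mapping principle on a ball of radius $\sim \rho^{2p+1/2}$ in $QX^s$, for $(\omega,\alpha)$ in a neighborhood of $(\omega_0, 0)$ with $\alpha\gtrsim\rho^{2p}$. Two structural inputs make this tractable. First, Lemma \ref{large_k_zeros} implies that the leading source $\omega^{2p+1}(-\rho\sin(t)\sin(x))^{2p+1}$ has Fourier support in $|n|,k\le 2p+1$; on this finite set of modes with the kernel removed, the symbol of $L_{\omega,\alpha}$ is bounded away from zero by a positive constant depending only on $m, p$ (by the irrationality of $m$ and $\omega$ near $\omega_0$), so $QL_{\omega,\alpha}^{-1}$ applied to this source yields a contribution of size $O(\rho^{2p+1})$ with no loss in $\alpha$. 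Second, on the remaining high Fourier modes Kelvin--Voigt damping gives $|L_{\omega,\alpha}(n,k)|\gtrsim\omega\alpha|n|k^2$; the dominant cross term $(2p+1)\omega^{2p+1}(-\rho\sin(t)\sin(x))^{2p}\partial_t v$, bounded in $X^{s-1}$ by $\rho^{2p}\|v\|_{X^s}$ via the Banach algebra estimate \eqref{algebra}, is mapped by $QL_{\omega,\alpha}^{-1}$ to a function small compared with $\|v\|_{X^s}$ because the $k^2$-regularization of the Laplacian damping compensates the lost $\partial_t$-derivative and $\alpha^{-1}\rho^{2p}\lesssim 1$. Combining these estimates yields the desired contraction, producing a unique $v = v(\rho,\omega,\alpha)$.

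With $v(\rho,\omega,\alpha)$ in hand, the two bifurcation equations reduce to scalar equations in $(\omega,\alpha)$ for each $\rho$. A direct computation using $L_{\omega,\alpha}[\cos(t)\sin(x)] = (1+m-\omega^2)\cos(t)\sin(x) - \omega\alpha\sin(t)\sin(x)$ shows that the $\sin(t)\sin(x)$-projection at leading order gives $\rho\omega\alpha\|\sin(t)\sin(x)\|^2 \approx \omega^{2p+1}\rho^{2p+1}c_p$, with $c_p = \int \sin^{2p+2}(t)\sin^{2p+2}(x)\,dt\,dx > 0$, yielding $\alpha\sim\rho^{2p}$ as required. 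The $\cos(t)\sin(x)$-projection gives $(\omega^2-1-m)\rho = O(\rho^q)$ for some $q > 1$ (the leading source, being odd in $t$, has zero $\cos(t)\sin(x)$-projection), determining $\omega\approx\sqrt{1+m}$. The system is solved simultaneously by the implicit function theorem in $(\omega,\alpha)$ near $(\omega_0,0)$. Finally, $C^{k_0}$ regularity follows by choosing $s > k_0 + 1$ and invoking the Sobolev embedding $X^s\subset C^{k_0}$ from Subsection \ref{cmpt_emb}.

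The main obstacle I expect is the quantitative contraction estimate for the range equation on the ball of radius $\rho^{2p+1/2}$: one must carefully separate bounded from high Fourier modes, track the interplay between the small parameter $\alpha\sim\rho^{2p}$ and the $k^2$-regularization from Kelvin--Voigt damping, and verify that the $\partial_t$-loss in the dominant cross term is exactly compensated so that the contraction constant remains strictly less than one.
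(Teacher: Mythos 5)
Your overall strategy---Lyapunov--Schmidt reduction in $X^s$, solving the range equation by contraction on a ball of radius $\sim\rho^{2p+1/2}$ using the finite Fourier support of the leading source term (Lemma~\ref{large_k_zeros}) together with the regularizing effect of Kelvin--Voigt damping, and then solving two scalar bifurcation equations for $(\omega,\alpha)$---matches the paper closely. The description of the range-equation contraction correctly identifies the two structural inputs (Proposition~\ref{nonlinear_zero} for $\mathcal{A}(0)$ and Lemma~\ref{invlemma} for the linearization), and you rightly flag the quantitative contraction bound as the hard step. One point worth sharpening there: your estimate ``$\alpha^{-1}\rho^{2p}\lesssim 1$'' gives a bound of order $1$, not one strictly less than $1$. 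In the paper's Proposition~\ref{contraction}, the contraction constant comes from splitting at a cutoff $K$ and using the high-$k$ inverse bound $\lesssim \tfrac{1}{K^2\rho^{2p}}+2$ from \eqref{invlemmaeq4}: after multiplying by the $\rho^{2p}$ factor from the Banach algebra estimate, the two pieces contribute $C/K^2$ (made small by choosing $K$ large depending only on $p,m,k_0$) and $2C\rho^{2p}$ (made small by choosing $\rho$ small). The extra $K$-dependence is the ingredient that turns ``bounded'' into ``a contraction.''

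The genuine gap is in the bifurcation step. You propose to close the two scalar equations by the implicit function theorem ``in $(\omega,\alpha)$ near $(\omega_0,0)$,'' but the range-equation solution $v(\rho,\omega,\alpha)$ is only defined for $\alpha\ge \tfrac14 W_0^{2p}\theta\rho^{2p}$ (Assumption~\ref{Assumption2}), and this admissible region degenerates to the single point $\{\alpha=0\}$ as $\rho\to 0$. There is therefore no fixed open neighborhood of $(\omega_0,0)$ on which the reduced bifurcation map is defined uniformly in $\rho$, so the classical IFT cannot be invoked at that base point. The paper circumvents this by treating the $\alpha$-equation \eqref{be1} as a fixed-point problem for
\[
\mathcal{B}(\alpha)=\omega^{2p}\theta\rho^{2p}-\frac{\omega^{2p}}{\rho}\int_{\T^2}F(\rho,t,x)\sin(t)\sin(x)\,dt\,dx
\]
on the $\rho$-dependent interval $I=\bigl[\tfrac12 W_0^{2p}\theta\rho^{2p},\ W_1^{2p}\theta\rho^{2p}+C\rho^{2p+1}\bigr]$, which sits inside the domain $[\tfrac14 W_0^{2p}\theta\rho^{2p},\infty)$ of $\mathcal{B}$; showing $\mathcal{B}$ is a contraction on $I$ requires the derivative bound $\|\partial_\alpha v\|_{V^s}\lesssim\rho^{1/2}$ (Proposition~\ref{est_der_rangeeq}), which in turn requires estimating $\partial_\alpha L_{\omega,\alpha}^{-1}$ (Lemma~\ref{est_der_nonlinear_map}). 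The $\omega$-equation \eqref{be2} is then closed by Brouwer's fixed point theorem on $[W_0^2,W_1^2]$, which needs only continuity of $\omega\mapsto\alpha(\omega,\rho)$ rather than differentiability. To salvage the IFT route you would have to rescale $\alpha=\rho^{2p}\beta$, establish that $v(\rho,\omega,\rho^{2p}\beta)$ extends continuously (indeed differentiably in $\beta$ and $\omega$) to $\rho=0$ for $\beta$ in a fixed interval, and verify nondegeneracy of the limiting Jacobian---in effect reproducing the very estimates the paper organizes through its fixed-point arguments, plus an additional $\partial_\omega v$ estimate the paper deliberately avoids by using Brouwer.
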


Upon linearizing \eqref{omegadnlw} we  look for solutions that have the form
\begin{equation}
\label{ansatz}
u(t,x) = \sum_{n \in \Z} \sum_{k \geq 1} c_{n,k} e^{int} \sin(kx).
\end{equation}
Because the solution is $\R$-valued the coefficients $c_{n,k}$ satisfy $\overline{c_{n,k}} = c_{-n,k}$.
Inserting the ansatz \eqref{ansatz} into the linearized equation (around the zero steady state $u=0$) yields
\begin{equation*}
-n^{2} \omega^{2} + k^{2} + i \alpha \omega n k^2 + m = 0.
\end{equation*}
In taking real and imaginary parts in this equation we find the following system of equations:
\begin{align}
-n^{2} \omega^{2} + k^{2} + m &= 0 \label{ansatzeq1}\\
\alpha \omega n k^2 &= 0 \label{ansatzeq2}.
\end{align}
Notice that \eqref{ansatzeq2} is automatically satisfied if $\alpha = 0$. Turning to \eqref{ansatzeq1} we see that if we take $n = \pm 1, k = 1$, then $\omega = \sqrt{1 + m}$. It follows that if $m > 0$ is taken to be irrational, then $\omega_{1} := \sqrt{1 + m}$ is irrational. With this in mind we fix $m > 0$ to be irrational .

Consider the linear operator
\begin{equation}
\label{linearization}
L_{\omega,\alpha} u := \omega^{2} \partial_{t}^{2} u - \partial_{x}^{2} u - \omega \alpha \partial_{t}\partial_{x}^{2} u + mu.
\end{equation}
With this notation we find that \eqref{omegadnlw} can be written
\begin{equation*}
L_{\omega, \alpha} u = \omega^{2p+1} \big ( \partial_{t} u \big )^{2p+1}.
\end{equation*}

\begin{lemma} \label{kernellemma}
The kernel of $L_{\omega_{1},0}$ in the space $X^s$ is two-dimensional: 
\begin{equation*}
\ker L_{\omega_{1},0} = \Span \{ e^{it} \sin(x), e^{-it} \sin(x) \}.
\end{equation*}
\end{lemma}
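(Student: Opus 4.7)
The plan is to compute the action of $L_{\omega_1,0}$ on the Fourier basis of $X^s$ and then solve the resulting characteristic equation over $(n,k)\in\Z\times\Z_{\geq 1}$, exploiting the irrationality of $m$ to rule out all modes except $(\pm 1,1)$.

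First I would take an arbitrary $u\in X^s$ and expand $u=\sum_{n,k} c_{n,k}e^{int}\sin(kx)$. Termwise differentiation (justified because the Fourier series converges in $H^s$ and $L_{\omega_1,0}$ is a bounded operator from $X^s$ to $X^{s-2}$) gives
\begin{equation*}
L_{\omega_1,0}u=\sum_{n\in\Z,\,k\geq 1}\bigl(-n^2\omega_1^2+k^2+m\bigr)\,c_{n,k}\,e^{int}\sin(kx).
\end{equation*}
Since $\{e^{int}\sin(kx)\}$ is an orthogonal family in $X^s$, the equation $L_{\omega_1,0}u=0$ forces, for every $(n,k)$, either $c_{n,k}=0$ or
\begin{equation*}
-n^2\omega_1^2+k^2+m=0,\qquad \text{i.e.,}\qquad n^2(1+m)=k^2+m.
\end{equation*}

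Next I would analyze this diophantine-type condition. Rearranging it as $(n^2-1)m=k^2-n^2$ splits into two cases. If $n^2=1$, the right-hand side must vanish, forcing $k^2=1$ and hence $k=1$ (as $k\geq 1$), yielding the two modes $(n,k)=(\pm1,1)$. If $n^2\neq 1$, then one can solve
\begin{equation*}
m=\frac{k^2-n^2}{n^2-1}\in\Q,
\end{equation*}
contradicting the standing assumption that $m$ is irrational. Hence $c_{n,k}$ must vanish for every $(n,k)\notin\{(1,1),(-1,1)\}$.

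Therefore any $u\in\ker L_{\omega_1,0}$ has the form $u=c_{1,1}e^{it}\sin(x)+c_{-1,1}e^{-it}\sin(x)$, with the reality constraint $\overline{c_{1,1}}=c_{-1,1}$ built into $X^s$. Conversely each such $u$ clearly lies in the kernel. I expect no serious obstacle here: the algebra is a single line, and the crux is simply invoking the irrationality of $m$ to exclude all off-diagonal resonances. The only point worth being careful about is justifying the termwise computation in $X^s$, which is immediate from the equivalence of the $X^s$-norm with the weighted $\ell^2$-norm on the Fourier coefficients.
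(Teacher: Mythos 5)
Your proof is correct and follows essentially the same route as the paper: expand in the Fourier basis, derive the characteristic condition $-n^2\omega_1^2+k^2+m=0$, rearrange it to $k^2-n^2=m(n^2-1)$, and invoke the irrationality of $m$ to exclude all modes except $(n,k)=(\pm1,1)$. The extra remarks on termwise computation and orthogonality are sound but not a substantive departure from the paper's argument.
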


\begin{proof}
	A function $u(t,x)$ of the form \eqref{ansatz} satisfies $L_{\omega_{1},0} u = 0$ if and only if $m - n^{2} \omega_{1}^{2} + k^{2} = 0$. Upon solving for $\omega_{1}^{2}$ and recalling that $\omega_{1} = \sqrt{1 + m}$ we find that
	\begin{equation*}
	\frac{m + k^{2}}{n^{2}} = 1 + m.
	\end{equation*}
	This can be rearranged to read $k^{2} - n^{2} = m(n^{2} - 1)$. Since $m \in (\R \setminus \Q)^{+}$ we must have $n = \pm 1$ and, in that case, $k = 1$.
\end{proof}

We write $X^{s} = \ker L_{\omega_{1},0} \oplus V^{s}$, the space $V^{s}$ being the orthogonal complement of $\ker L_{\omega_{1},0}$ (with respect to the $L^{2}$ inner product). Taking $A \subset \Z \times \N$ to be the set
\begin{equation*}
A := \{ (n,k) \in \Z \times \N \ \vert \ (n,k) \neq (\pm 1, 1) \},
\end{equation*}
we see that $u \in V^{s}$ if
\begin{equation*}
u(t,x) = \sum_{(n,k) \in A} c_{n,k} e^{int} \sin(kx).
\end{equation*}

Consider the inverse operator $L_{\omega,\alpha}^{-1}$. Formally we have
\begin{equation*}
L_{\omega,\alpha}^{-1} f(t,x) = \sum_{(n,k) \in A} \frac{f_{n,k}}{k^{2} + m - \omega^{2} n^{2} + i \omega \alpha nk^2} e^{int} \sin(kx).
\end{equation*}
It is natural that we take the domain of $L_{\omega,\alpha}^{-1}$ to be
\begin{equation*}
\mathscr{D}^{s} ( L_{\omega,\alpha}^{-1} ) = \{ f \in X^{s} \ \vert \ \supp \widehat{f} \subseteq A \} = V^{s},
\end{equation*}
where $\supp \widehat{f}$ denotes the support of the Fourier transform of $f$.

We set
\begin{equation*}
\vartheta(n,k,m, \omega,\alpha) := k^{2} + m - \omega^{2} n^{2} +i \omega \alpha nk^2,
\end{equation*}
so that
\begin{equation}\label{invexp}
L_{\omega,\alpha}^{-1} f(t,x) = \sum_{(n,k) \in A} \frac{f_{n,k}}{\vartheta(n,k,m,\omega,\alpha)} e^{int} \sin(kx).
\end{equation}
Note that
\begin{equation}
\label{lem32pfeq1}
\vert \vartheta(n,k,m, \omega, \alpha) \vert^{2} = (k^{2} - \omega^{2} n^{2} + m)^{2} + \omega^2 \alpha^2 n^2k^4.
\end{equation}

In Section \ref{SectionLSReduction} below we will write our solution $u(t,x)$ of \eqref{omegadnlw} as
\begin{equation*}
    u(t,x) = \frac{\rho}{2} ( e^{it} + e^{-it}  ) \sin(x) + v(t,x),
\end{equation*}
where $\rho \in \R$ and $v \in V^{s}$. The details of this decomposition will be discussed further below.

It is at this stage that we make the following key assumptions regarding the values of the parameters. 

\begin{assumption}
\label{Assumption1}
Recall that we assume $m > 0$ is irrational. There exist $0 < W_{0} < W_{1}$ such that $W_{0} < \omega_{1} = \sqrt{1 + m} < W_{1}$ and $1 < W_{0}^{2} \leq \omega^{2} \leq W_{1}^{2} < m+2$.
\end{assumption}

\begin{assumption}
\label{Assumption2}
The parameter $\rho \in (0,1)$ satisfies $0 < W_{0}^{2p} \theta \rho^{2p}/4 \leq \alpha$, where $\theta = \theta(p) > 0$ is a constant (depending only on $p$) that will be determined below.
\end{assumption}

\begin{assumption}
\label{Assumption3}
Given $k_0\ge 2$ from Theorem \ref{aux_maintheorem}, we assume that the Sobolev index $$s\in [k_0+10,k_0+20].$$
\end{assumption}

We conclude this section with the following result concerning the inverse operator $L_{\omega,\alpha}^{-1}$ which we will require in the next section. In what follows we will often use the notation $\Pi_{k > K}$ to denote the projection onto Fourier modes for which $k > K$.

\begin{lemma} \label{invlemma}
Let $s \geq 2$. 
\begin{equation}
\label{invlemmaeq1}
L_{\omega, \alpha}: V^{s} \to V^{s-3},
\end{equation}
while
\begin{equation}
\label{invlemmaeq2}
L_{\omega, \alpha}^{-1}: V^{s} \to V^{s+1}.
\end{equation}
Let $m > 0$ be irrational. Then under Assumptions \ref{Assumption1} and \ref{Assumption2} above, if $\rho\in(0,1)$ is sufficiently small (depending only on $m,p$) we have the bound

\begin{equation}
\label{invlemmaeq3}
\| L_{\omega,\alpha}^{-1} \|_{\mathcal{L}(V^{s}, V^{s+1})} \lesssim \frac{1}{\rho^{2p}}.
\end{equation}where the implicit constant only depends on $m,p$.

Furthermore, let $K$ be any positive integer. Under the same assumptions we also have the bound
\begin{equation}
\label{invlemmaeq4}
\| L_{\omega,\alpha}^{-1}\Pi_{k>K} \|_{\mathcal{L}(\Pi_{k>K}V^{s}, V^{s+1})} \lesssim \frac{1}{K^2\rho^{2p}}+2.
\end{equation}where the implicit constant still only depends on $m,p$.
\end{lemma}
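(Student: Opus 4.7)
The strategy is to reduce all four claims to pointwise estimates on the Fourier multiplier symbol $\vartheta(n,k,m,\omega,\alpha)$ from \eqref{invexp}. Since both $L_{\omega,\alpha}$ and $L_{\omega,\alpha}^{-1}$ act diagonally on the basis $\{e^{int}\sin(kx)\}$ and preserve the index set $A$ defining $V^s$, and since the $X^s$-norm is equivalent to $\sum (n^{2s}+k^{2s})|c_{n,k}|^2$, the operator norm of $L_{\omega,\alpha}^{-1}$ from $V^s$ to $V^{s+1}$ is bounded by $\sup_{(n,k)\in A}\sqrt{n^2+k^2}/|\vartheta(n,k)|$, via the elementary inequality $n^{2(s+1)}+k^{2(s+1)}\le (n^2+k^2)(n^{2s}+k^{2s})$. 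Statement \eqref{invlemmaeq1} is immediate from the complementary upper bound $|\vartheta(n,k)|\lesssim_{m,\omega,\alpha}\max(|n|,k)^3$ (the cubic growth coming from the mixed derivative term $\omega\alpha nk^2$), so the multiplier raises the Sobolev index by at most three, and \eqref{invlemmaeq2} is subsumed by \eqref{invlemmaeq3}.

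For \eqref{invlemmaeq3} I would split on whether $n=0$. When $n=0$ and $k\ge 1$, $|\vartheta|=k^2+m$, so $\sqrt{n^2+k^2}/|\vartheta|=k/(k^2+m)\le 1/(2\sqrt{m})$, a constant depending only on $m$. When $n\ne 0$, the imaginary part in \eqref{lem32pfeq1} gives $|\vartheta|\ge \omega\alpha|n|k^2$, and Assumptions \ref{Assumption1} and \ref{Assumption2} together force $\omega\alpha\gtrsim_{m,p}\rho^{2p}$; combined with the elementary bound $\sqrt{n^2+k^2}\le \sqrt{2}\,|n|k$ valid for $|n|,k\ge 1$, this yields $\sqrt{n^2+k^2}/|\vartheta|\lesssim_{m,p}1/(\rho^{2p}k)\le 1/\rho^{2p}$, proving \eqref{invlemmaeq3}.

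The projected bound \eqref{invlemmaeq4} is the most delicate step and is where I expect the main obstacle: simply restricting the previous argument to $k>K$ only gains a factor $1/k\le 1/K$ on the $n=\pm 1$ branch, not the advertised $K^{-2}$. The fix is to play the two nonnegative contributions in \eqref{lem32pfeq1} off against each other. For $n=0$ we still have $k/(k^2+m)\le 1/k\le 1/K\le 1$. For $n\ne 0$ I would split further: in the near-resonance regime $k^2<2\omega^2 n^2$, the damping part dominates, giving $|\vartheta|\ge \omega\alpha|n|k^2$ and $\sqrt{n^2+k^2}\lesssim_m|n|$ (using $\omega^2\le m+2$ from Assumption \ref{Assumption1}), whence $\sqrt{n^2+k^2}/|\vartheta|\lesssim_{m,p}1/(\rho^{2p}k^2)\le 1/(\rho^{2p}K^2)$; in the off-resonance regime $k^2\ge 2\omega^2 n^2$, the real part is coercive, giving $k^2+m-\omega^2 n^2\ge k^2/2$ and hence $|\vartheta|\ge k^2/2$, so $\sqrt{n^2+k^2}/|\vartheta|\le 2\sqrt{2}/k\le 2\sqrt{2}$. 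Summing the three contributions yields \eqref{invlemmaeq4}. The conceptual point, and the mechanism flagged in the introduction as ``removing the singularity'', is that the wave-operator small divisors concentrate precisely in the near-resonance zone $k\approx \omega n$, which is exactly where the Kelvin--Voigt damping term $\omega\alpha nk^2$ is largest, so the two parts of $\vartheta$ complement each other without any Diophantine exclusion.
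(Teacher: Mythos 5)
Your proof is correct and takes essentially the same approach as the paper's: reduce to pointwise bounds on the symbol $\vartheta$, then split into a near-resonance regime where the Kelvin--Voigt term $\omega\alpha nk^2$ supplies the lower bound on $|\vartheta|$ (yielding $1/(K^2\rho^{2p})$ once $k>K$) and an off-resonance regime where the real part $k^2+m-\omega^2 n^2$ is coercive of size $\gtrsim k^2$. Your dichotomy $k^2 \lessgtr 2\omega^2 n^2$ is a mild rephrasing of the paper's $k^4/(k^2+m-\omega^2 n^2)^2 \lessgtr 4$, but the mechanism and resulting bounds are identical.
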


\begin{proof}
We will show the last bound since the second one will follow similarly. 
Note that if $v\in V^s$ then we have by the expansion \eqref{invexp}
$$\| L_{\omega,\alpha}^{-1}v \|^2_{V^{s+1}} = \sum_{(n,k > K)\in A}  \frac{(n^{2(s+1)}+k^{2(s+1)})\vert v_{n,k} \vert^2}{\vert \vartheta(n,k,m,\omega,\alpha) \vert ^2}$$
and hence it suffices to show that the quantities
\begin{equation*}
\frac{n^2}{\vert \vartheta(n,k,m,\omega,\alpha) \vert ^2} \quad \text{and} \quad \frac{k^2}{\vert \vartheta(n,k,m,\omega,\alpha) \vert ^2}.
\end{equation*}
are bounded by an appropriate constant depending only on $\rho,p,$ and $m$.

Recall that 
$$\vert \vartheta(n,k,m, \omega, \alpha) \vert^{2} = (k^{2} - \omega^{2} n^{2} + m)^{2} + \omega^2 \alpha^2 k^4n^2.$$

We first establish the desired bounds on
\begin{equation*}
   \frac{n^2}{\vert \vartheta(n,k,m,\omega,\alpha) \vert ^2} 
\end{equation*}
If $n=0$ there is nothing to prove. Otherwise $\vert n \vert >0$ and we first note that 
\begin{equation}\label{n_bound}
\frac{n^2}{\vert \vartheta(n,k,m,\omega,\alpha) \vert ^2}\le \frac{n^2}{\omega^2 \alpha^2 k^4n^2}=\frac{1}{\omega^2k^4\alpha^2}< \frac{1}{K^4\alpha^2} \lesssim _{m,p} \frac{1}{K^4\rho^{4p}}
\end{equation}
where we used Assumptions \ref{Assumption1} and \ref{Assumption2}.

It remains to establish a bound on 
\begin{equation*}
    \frac{k^2}{\vert \vartheta(n,k,m,\omega,\alpha) \vert ^2}.
\end{equation*}
First suppose that 
$$\frac{k^4}{(k^{2} - \omega^{2} n^{2} + m)^{2}}\le 4.$$
As $k^2\le k^4$ we are done, since
\begin{equation*}
    \frac{k^{2}}{\vert \vartheta(n,k,m,\omega,\alpha) \vert ^2} \leq \frac{k^{2}}{(k^{2} - \omega^{2} n^{2} + m)^{2}} \leq \frac{k^4}{(k^{2} - \omega^{2} n^{2} + m)^{2}}.
\end{equation*}
On the other hand if 
$$\frac{k^4}{(k^{2} - \omega^{2} n^{2} + m)^{2}} > 4$$ then 
$$k^2 > 2 \vert k^{2}+m - \omega^{2} n^{2} \vert\ge 2k^2+2m-2W_1^2n^2$$ 
and hence
$$k^2\le 2W_1^2n^2-2m\le 2(2+m)n^2-2m < 2(2+m)n^2,$$
where we used that $W_1^2<m+2$ from Assumption \ref{Assumption1}.

It follows that in this case 
$$\frac{k^2}{\vert \vartheta(n,k,m,\omega,\alpha) \vert ^2}< \frac{2(2+m)n^2}{\vert \vartheta(n,k,m,\omega,\alpha) \vert ^2}\lesssim_{m,p} \frac{1}{K^4\rho^{4p}},$$
where the last part follows from \eqref{n_bound}. Hence 
\begin{align}\label{max_of_two}
    \frac{n^2}{\vert \vartheta(n,k,m,\omega,\alpha) \vert ^2} \quad, \quad \frac{k^2}{\vert \vartheta(n,k,m,\omega,\alpha) \vert ^2} \le \max\left \{4,C(m,p)\frac{1}{K^4\rho^{4p}} \right \}
\end{align}
for some constant $C(m,p)>0$ and \eqref{invlemmaeq4} follows. We note that to obtain \eqref{invlemmaeq3} we can repeat the same argument but omit the addition of 2 by taking $\rho\in (0,1)$ sufficiently small depending only on $m,p$ so that the maximum of the two quantities in \eqref{max_of_two} is not 4. The reason we did not do this above for \eqref{invlemmaeq4} is to avoid the dependence of $\rho$ on $K$. 
\end{proof}

\section{The Lyapunov-Schmidt Reduction}
\label{SectionLSReduction}

As in Section \ref{SectionAnsatz} we write $X^{s} = \ker L_{\omega_{1},0} \oplus V^{s}$, the orthogonal sum being with respect to the $L^{2}(\T^{2})$ inner product. Denote by
\begin{equation*}
    \Pi_{\omega_{1},0} : X^{s} \to \ker L_{\omega_{1},0} \qquad \text{and} \qquad \Pi_{V^{s}}: X^{s} \to V^{s},
\end{equation*}
the relevant projection operators. If $u(t,x) \in X^{s}$ solves \eqref{omegadnlw}, then we write
\begin{equation*}
    u(t,x) = \frac{\rho}{2} (e^{it} + e^{-it}) \sin(x) + v(t,x),
\end{equation*}
where $v \in V^{s}$ and $\rho > 0$ is a real amplitude parameter. As the projection operators commute with $L_{\omega, \alpha}$ we obtain the system
\begin{align}
    L_{\omega,\alpha} \rho \cos(t) \sin(x) &= \Pi_{\omega_{1},0} \omega^{2p+1} \Big( \partial_{t} [ \rho \cos(t) \sin(x) + v(t,x) ]\Big )^{2p+1} \label{bifeq}\\
    L_{\omega,\alpha} v(t,x) &= \Pi_{V^{s-1}} \omega^{2p+1} \Big( \partial_{t} [ \rho \cos(t) \sin(x) + v(t,x) ]\Big )^{2p+1}. \label{rangeeq_g}
\end{align}
Equation \eqref{rangeeq_g} is referred to as the range equation while equation \eqref{bifeq} is called the bifurcation equation.

\subsection{Solution of the Range Equation}
The Implicit Function Theorem is not applicable as in the usual Hopf Bifurcation Theorem because when the damping $\alpha=0$ there is a genuine small divisor problem since the numbers $k^2+m-\omega^2n^2$ accumulate to zero. The novelty of this work is to show that this singularity can be removed owing to a certain integral vanishing (see Lemma \ref{large_k_zeros}), in addition to having a suitable lower bound for $\alpha$ in terms of $\rho$ (see Assumption \ref{Assumption2}), and also exploiting the regularizing effect of the Laplacian damping. The removal of this singularity invokes neither Diophantine conditions on the temporal frequency $\omega$, nor any Nash-Moser iteration. We can accomplish this by solving the range equation using the contraction mapping principle. This is a surprising fact since the solutions we will find have damping parameter which accumulates to $\alpha=0$.

Note that \eqref{rangeeq_g} can be written as 
\begin{equation}\label{inv_rangeeq_g}
  v = L_{\omega,\alpha}^{-1}\Pi_{V^{s-1}} \omega^{2p+1} \Big ( \partial_{t} [\rho \cos(t) \sin(x) + v] \Big )^{2p+1}.
\end{equation}

We begin with some Lemmas we will need to solve the range equation \eqref{inv_rangeeq_g}.

\begin{lemma}
\label{large_k_zeros}
There is an integer $K_{p} \geq 1$ such that for each $k > K_{p}$ we have
\begin{equation*}
    \int_{\T} \sin^{2p+1}(x) \sin(kx) dx = 0.
\end{equation*}
\end{lemma}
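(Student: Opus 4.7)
The plan is to reduce the claim to orthogonality of the sines by first writing $\sin^{2p+1}(x)$ explicitly as a finite trigonometric polynomial. Using the identity $\sin(x) = (e^{ix} - e^{-ix})/(2i)$ and expanding via the binomial theorem, I would obtain
\begin{equation*}
\sin^{2p+1}(x) = \frac{1}{(2i)^{2p+1}} \sum_{j=0}^{2p+1} \binom{2p+1}{j} (-1)^{2p+1-j} e^{i(2j - 2p - 1)x}.
\end{equation*}
The exponents $2j-2p-1$ range over the odd integers from $-(2p+1)$ to $2p+1$, so $\sin^{2p+1}(x)$ is a linear combination of the exponentials $e^{i\ell x}$ with $|\ell| \leq 2p+1$.

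Next, since $\sin^{2p+1}$ is odd in $x$, one can either pair up conjugate frequencies or argue directly from the expansion above that the function can in fact be written as a finite real-linear combination
\begin{equation*}
\sin^{2p+1}(x) = \sum_{\ell=1}^{p+1} a_\ell \sin\bigl((2\ell - 1) x\bigr)
\end{equation*}
for suitable real coefficients $a_\ell$; in particular, the largest frequency present is $2p+1$.

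Finally, I would invoke the standard $L^2(\T)$ orthogonality $\int_\T \sin(jx)\sin(kx)\,dx = \pi \delta_{jk}$ for positive integers $j,k$. Setting $K_p := 2p+1$, any $k > K_p$ differs from every frequency appearing in the expansion of $\sin^{2p+1}(x)$, so termwise integration gives
\begin{equation*}
\int_\T \sin^{2p+1}(x)\sin(kx)\,dx = \sum_{\ell=1}^{p+1} a_\ell \int_\T \sin\bigl((2\ell-1)x\bigr)\sin(kx)\,dx = 0,
\end{equation*}
completing the argument. There is no real obstacle here: the only slightly delicate point is keeping track that the expansion yields only odd frequencies bounded by $2p+1$, which makes the choice $K_p = 2p+1$ explicit.
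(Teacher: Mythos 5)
Your proof is correct and takes essentially the same approach as the paper: both expand $\sin^{2p+1}(x)$ via Euler's formula and the binomial theorem to see that it is a trigonometric polynomial with frequencies bounded in absolute value by $2p+1$, so the integral against $\sin(kx)$ vanishes once $k > 2p+1$. The only cosmetic difference is that you first rewrite the power of sine as a real sine series and then invoke $L^2$ orthogonality, whereas the paper keeps the whole product in exponential form and observes that no exponent vanishes; the underlying computation is identical.
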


\begin{proof}
Using the formula 
$$ \sin^{2p+1}(x) \sin(kx) = i 2^{-2 p - 2} (e^{-i k x} - e^{i k x}) i^{2p+1} (e^{-i x} - e^{i x})^{2 p + 1}$$
as well as the Binomial Theorem to expand the term $(e^{-i x} - e^{i x})^{2 p + 1}$ as a sum of exponentials multiplied by constants, it is not difficult to see that we obtain the desired result as long as $k>2p+1$ since the exponents in the exponentials do not vanish, and using periodic boundary conditions. 
\end{proof}

In what follows we will often use the notation $\Pi_{k > K_{p}}$ to denote the projection onto Fourier modes for which $k > K_{p}$.

\begin{lemma}\label{vanish}
We have that \begin{equation*}
    \Pi_{k > K_{p}} \Pi_{V^{s}} \sin^{2p+1}(t) \sin^{2p+1}(x) = 0.
\end{equation*}
\end{lemma}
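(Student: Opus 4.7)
The plan is to exploit the tensor product structure of $\sin^{2p+1}(t)\sin^{2p+1}(x)$ together with Lemma \ref{large_k_zeros}: I will show that the function already has Fourier support contained in $\{(n,k) \in \Z \times \N : k \leq K_p\}$, so the projection $\Pi_{k > K_p}$ kills it outright.

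First I would record the consequence of Lemma \ref{large_k_zeros}: since $\int_\T \sin^{2p+1}(x)\sin(kx)\,dx = 0$ for every $k > K_p$, the Fourier sine series of the one-variable function $\sin^{2p+1}(x)$ terminates after finitely many terms, namely
\begin{equation*}
\sin^{2p+1}(x) = \sum_{k=1}^{K_p} b_k \sin(kx)
\end{equation*}
for some real constants $b_k$. This is of course also visible from the Binomial Theorem expansion used in the proof of Lemma \ref{large_k_zeros}.

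Next, I would use that the two-variable Fourier expansion of the product $f(t)g(x)$ is the product of the one-variable expansions. Combining this with the elementary identity $\sin^{2p+1}(t) = (2i)^{-(2p+1)}(e^{it}-e^{-it})^{2p+1}$, one obtains
\begin{equation*}
\sin^{2p+1}(t)\sin^{2p+1}(x) = \sum_{n \in \Z}\sum_{k=1}^{K_p} c_{n,k}\, e^{int}\sin(kx)
\end{equation*}
for some coefficients $c_{n,k}$, where the sum in $n$ is in fact also finite (supported on odd $|n| \leq 2p+1$). In particular the Fourier support of this function is contained in $\{(n,k) : k \leq K_p\}$.

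Finally, since $\Pi_{V^s}$ and $\Pi_{k > K_p}$ are both diagonal with respect to the basis $\{e^{int}\sin(kx)\}$, they are Fourier multipliers and commute; moreover applying $\Pi_{V^s}$ can only delete modes from the above expansion, so the Fourier support stays inside $\{k \leq K_p\}$. Applying $\Pi_{k > K_p}$ then annihilates every remaining term, yielding the claimed vanishing. There is no substantive obstacle here; the lemma is essentially just a two-variable restatement of Lemma \ref{large_k_zeros} once the product structure is observed.
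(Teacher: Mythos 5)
Your proposal is correct and follows essentially the same route as the paper: both exploit the tensor product structure $\sin^{2p+1}(t)\sin^{2p+1}(x)$ together with Lemma \ref{large_k_zeros} to conclude that the Fourier coefficients in the $\sin(kx)$ direction vanish for $k > K_p$. The paper expresses this by directly computing the double Fourier coefficient and factoring it via Fubini, whereas you phrase it as finiteness of the Fourier support, but these are the same observation.
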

\begin{proof}
For the first part, as a consequence of Lemma \ref{large_k_zeros} we find that if $k > K_{p}$, then for any $n \in \Z$
\begin{align*}
    &\int_{\T^{2}} \sin^{2p+1}(t) \sin^{2p+1}(x) e^{int} \sin(kx) dt dx \\
    = & \left ( \int_{\T} e^{int} \sin^{2p+1}(t) dt \right ) \left ( \int_{\T} \sin^{2p+1}(x) \sin(kx) dx \right ) = 0,
\end{align*}
where we have used Fubini's Theorem to factor the integrals. It follows that
\begin{equation*}
    \Pi_{k > K_{p}} \Pi_{V^{s}} \sin^{2p+1}(t) \sin^{2p+1}(x) = 0.
\end{equation*}
\end{proof}

For $(n,k) \in A$, we define
\begin{equation}\label{rational_exp}
    Q(n,k) = \frac{n^{2}}{(k^{2} + m - \omega^{2} n^{2})^{2}} \quad \text{and} \quad R(n,k) = \frac{k^{2}}{(k^{2} + m - \omega^{2} n^{2})^{2}}.
\end{equation}

\begin{lemma}
\label{QRbound_small_k}
Let $K > K_p$ be a positive integer and let $(n,k) \in A$ with $k \leq K$. Then provided $0<W_1^2-W_0^2$ is small enough (depending on $K,m,p$) there is a constant $C(K, m, p)$ which is independent of $\rho$ such that
\begin{equation*}
    Q(n,k), R(n,k) \leq C(K, m, p).
\end{equation*}
\end{lemma}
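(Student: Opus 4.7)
The plan is to partition the index set $\{(n,k)\in A : k\le K\}$ into three regimes depending on the size of $|n|$, handle each regime separately, and finally combine. The only nontrivial regime is a finite one, where I will leverage the irrationality of $m$ via Lemma \ref{kernellemma} together with a small-perturbation argument in $\omega^2$ around $\omega_1^2=1+m$; this is what forces the smallness hypothesis on $W_1^2-W_0^2$.

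First, the trivial regime $n=0$: then $Q(0,k)=0$, and $R(0,k)=k^2/(k^2+m)^2\le K^2/(1+m)^2$, which already gives an acceptable bound depending only on $K$ and $m$. Next, the large-$|n|$ regime: choose $N_0=N_0(K,m)$ large enough that $W_0^2 N_0^2 \ge 2(K^2+m)$. Then for $|n|\ge N_0$ and $k\le K$ one has $\omega^2 n^2 - k^2 - m \ge \tfrac12 \omega^2 n^2 \ge \tfrac12 W_0^2 n^2$, so
\begin{equation*}
Q(n,k)\le \frac{n^2}{\bigl(\tfrac12 W_0^2 n^2\bigr)^2}=\frac{4}{W_0^4 n^2}\le \frac{4}{W_0^4},\qquad R(n,k)\le \frac{4K^2}{W_0^4 n^4}\le \frac{4K^2}{W_0^4},
\end{equation*}
with constants depending only on $K$ and $m$ (recall $W_0^2>1$ from Assumption \ref{Assumption1}).

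This leaves the finite set $S:=\{(n,k)\in A : 1\le |n|\le N_0,\ 1\le k\le K\}$, with $(\pm 1,1)$ excluded. At the critical frequency $\omega=\omega_1=\sqrt{1+m}$, the proof of Lemma \ref{kernellemma} shows that $k^2+m-\omega_1^2 n^2\ne 0$ for every $(n,k)\in A$, using the irrationality of $m$. Since $S$ is finite,
\begin{equation*}
\delta \;:=\; \min_{(n,k)\in S}\bigl|k^2+m-\omega_1^2 n^2\bigr| \;>\; 0,
\end{equation*}
and $\delta$ depends only on $K$ and $m$. The map $\omega^2\mapsto k^2+m-\omega^2 n^2$ is Lipschitz in $\omega^2$ with constant $n^2\le N_0^2$, uniformly over $S$. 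Therefore, if $W_1^2-W_0^2$ is chosen small enough that $N_0^2(W_1^2-W_0^2)\le \delta/2$ (a smallness condition depending only on $K$ and $m$, since $N_0$ and $\delta$ do), then for every $\omega^2\in[W_0^2,W_1^2]$ and every $(n,k)\in S$,
\begin{equation*}
\bigl|k^2+m-\omega^2 n^2\bigr|\;\ge\;\delta/2.
\end{equation*}
This gives $Q(n,k)\le 4N_0^2/\delta^2$ and $R(n,k)\le 4K^2/\delta^2$ on $S$.

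Combining the three cases yields the bound $Q(n,k),R(n,k)\le C(K,m,p)$ with $C$ independent of $\rho$. The only real obstacle is the last step, and its resolution is exactly the role played by the hypothesis $W_1^2-W_0^2\ll 1$: it ensures that the finitely many resonance denominators $k^2+m-\omega^2n^2$ appearing for small $k$ cannot drift from their nonzero values at $\omega_1$ into a dangerously small range; no Diophantine condition is needed because we are away from the kernel mode and the set of relevant $(n,k)$ is finite.
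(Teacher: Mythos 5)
Your proof is correct and follows essentially the same route as the paper's: split into $n=0$, $|n|$ large (where $\omega^{2}n^{2}-k^{2}-m$ is positively large uniformly in $\omega\in[W_{0},W_{1}]$), and the remaining finite set of indices, on which the irrationality of $m$ makes $\vartheta$ nonzero at $\omega_{1}$ and the smallness of $W_{1}^{2}-W_{0}^{2}$ keeps it bounded away from zero. The only cosmetic difference is that you phrase the last step as a Lipschitz-in-$\omega^{2}$ perturbation around $\omega_{1}^{2}$ with the explicit $\delta$, whereas the paper imposes the smallness condition directly on $|\omega^{2}-(1+m)|$ and factors out $n^{2}$; these are equivalent.
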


\begin{proof}
We begin by focusing on the bound for $Q(n,k)$.
If $n=0$ there is nothing to prove. Otherwise if $n\neq 0$ and 
\begin{equation*}
    n^2\ge \left ( \frac{K^2+1+m}{W_0^2} \right ) \vert n \vert
\end{equation*}
then 
\begin{equation*}
     \frac{\omega^2n^2-(k^2+m)}{\vert n \vert}\ge \frac{W_0^2n^2-(K^2+m)}{\vert n \vert}\ge (K^2+1+m)-\frac{(K^2+m)}{\vert n \vert}\ge 1
\end{equation*}
 Hence 
\begin{equation*}
    \frac{n^2}{(\omega^2n^2-(k^2+m))^2}\le 1.
\end{equation*}
On the other hand if $n\neq 0$ and 
\begin{equation*}
    n^2< \left ( \frac{K^2+1+m}{W_0^2} \right )\vert n \vert
\end{equation*}
hence by Assumption \ref{Assumption1} since $W_0>1$,
$$\vert n \vert < \left ( \frac{K^2+1+m}{W_0^2} \right ) < K^2+1+m.$$

 It follows both $\vert n\vert,k$ are bounded above by constants which only depend on $K,m,p$; that is $1\le \vert n \vert, k\le A(K,m,p)$ for some $ A(K,m,p)>0$. In this case we can choose $W^2_1-W^2_0>0$ small enough so that 
 \begin{equation*}
      \vert \omega^2-(1+m)\vert < \min_{\vert n \vert ,k \le  A(K,m,p)}\frac{1}{2}\left | \frac{k^2+m}{n^2}-(1+m)\right |
 \end{equation*}
Then it is easily seen that 
$$\vert \omega^2n^2-(k^2+m) \vert\ge \min_{\vert n \vert ,k \le  A(K,m,p)}\frac{1}{2}\left | \frac{k^2+m}{n^2}-(1+m)\right | >0,$$ 
where the last part is positive since $m$ is assumed irrational. Hence the quantities $Q(n,k)$ are bounded, above by a constant depending only on $m,p$.
Lastly, the statement concerning the quantities $R(n,k)$ follows because $k^2$ is bounded and  we just showed the denominators are bounded away from zero.
\end{proof}

\begin{lemma}\label{small_inv_lemma}
Let $K>K_p$ be given. Then provided $0<W_1^2-W_0^2$ is small enough (depending on $K,m,p$) there is some $C(K,m,p) > 0$ independent of $\rho$ such that
\begin{align*}
\| L_{\omega,\alpha}^{-1} \Pi_{k \le K} \|_{\mathcal{L}(\Pi_{k \le K}V^{s-1}, V^s)} & \le C(K,m,p).
\end{align*}
That is, the inverse restricted to the said region is bounded independently of $\rho$.
\end{lemma}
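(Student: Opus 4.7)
The plan is to combine the explicit Fourier multiplier representation \eqref{invexp} of $L_{\omega,\alpha}^{-1}$ with the mode-by-mode bounds furnished by Lemma \ref{QRbound_small_k}. In contrast with Lemma \ref{invlemma}, the restriction $k \le K$ together with the irrationality of $m$ allows the real part of $\vartheta$ alone to provide a lower bound, so we never have to invoke the damping contribution $\omega^2\alpha^2 n^2 k^4$; this is precisely why the resulting constant ends up independent of $\rho$.

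First, for $v \in \Pi_{k \le K} V^{s-1}$, using \eqref{invexp} and the equivalent form of the $V^s$-norm, I would write
\begin{equation*}
    \| L_{\omega,\alpha}^{-1} \Pi_{k \le K} v \|^2_{V^s} \sim \sum_{\substack{(n,k) \in A \\ 1 \le k \le K}} \frac{n^{2s} + k^{2s}}{|\vartheta(n,k,m,\omega,\alpha)|^2} \, |v_{n,k}|^2,
\end{equation*}
so it suffices to establish a pointwise multiplier bound
\begin{equation*}
    \frac{n^{2s} + k^{2s}}{|\vartheta(n,k,m,\omega,\alpha)|^2} \le C(K,m,p) \, (n^{2(s-1)} + k^{2(s-1)})
\end{equation*}
for every $(n,k) \in A$ with $k \le K$, with a constant $C(K,m,p)$ uniform in $\rho$.

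Next, I would apply the elementary inequality $n^{2s} + k^{2s} \le (n^2 + k^2)(n^{2(s-1)} + k^{2(s-1)})$ (which holds for all $n \in \Z$ and $k \ge 1$, degenerating to $k^{2s} \le k^2 \cdot k^{2(s-1)}$ in the $n=0$ case) to reduce the pointwise estimate to
\begin{equation*}
    \frac{n^2 + k^2}{|\vartheta(n,k,m,\omega,\alpha)|^2} \le C(K,m,p).
\end{equation*}
Dropping the nonnegative contribution $\omega^2 \alpha^2 n^2 k^4$ in \eqref{lem32pfeq1} yields the lower bound $|\vartheta|^2 \ge (k^2 + m - \omega^2 n^2)^2$, so the left-hand side above is at most $Q(n,k) + R(n,k)$ in the notation \eqref{rational_exp}. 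Lemma \ref{QRbound_small_k}, applied under the hypothesis that $0 < W_1^2 - W_0^2$ is chosen small enough depending on $K, m, p$, then supplies the required uniform upper bound on both $Q$ and $R$ for $(n,k) \in A$ with $k \le K$, completing the argument. No substantive obstacle remains at this stage: the delicate work of separating $\omega^2 n^2$ from the lattice $\{k^2+m : 1 \le k \le K\}$ has already been isolated in Lemma \ref{QRbound_small_k}, and the present lemma is essentially its bookkeeping consequence packaged at the operator-norm level.
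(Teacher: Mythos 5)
Your proposal is correct and follows essentially the same route as the paper: reduce via the Fourier multiplier representation \eqref{invexp} to a mode-by-mode bound on the symbol, drop the damping term in $|\vartheta|^2$ so that $Q(n,k)+R(n,k)$ controls the multiplier, and invoke Lemma \ref{QRbound_small_k}. The paper's own (one-line) proof cites exactly these two ingredients; your elementary inequality $n^{2s}+k^{2s}\le(n^2+k^2)(n^{2(s-1)}+k^{2(s-1)})$ is a minor variant of the factorization $n^{2s}=n^2\cdot n^{2(s-1)}$, $k^{2s}=k^2\cdot k^{2(s-1)}$ used at the start of the proof of Lemma \ref{invlemma}, and changes nothing substantive.
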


\begin{proof}
This follows from Lemma \ref{QRbound_small_k} (recall also the beginning of the proof of Lemma \ref{invlemma}).
\end{proof}

Our strategy will be to decompose the range equation according to the spatial frequency $k$, thereby enabling us to obtain a contraction. Let 
\begin{equation}\label{nonlinear_map}
    \mathcal{A}(v) = L_{\omega,\alpha}^{-1} \Pi_{V^{s-1}} \omega^{2p+1} \Big ( \partial_{t} [\rho \cos(t) \sin(x) + v(t,x) ] \Big )^{2p+1}.
\end{equation}

We begin with a proposition which establishes a bound for the above nonlinear map at zero. 

\begin{proposition}\label{nonlinear_zero}
If $0<W_1^2-W_0^2$ is small enough (depending on $m,p$) then there is some constant $C(p,m,k_0)>0$ depending only on $p,m,k_0$ so that 

$$\| \mathcal{A}(0) \|_{V^s} \le C(p,m,k_0) \rho^{2p+1}.$$
\end{proposition}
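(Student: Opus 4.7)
The plan is to evaluate $\mathcal{A}(0)$ explicitly, observe that Lemma \ref{vanish} confines its Fourier support to low spatial frequencies, and then use the uniform-in-$\rho$ bound from Lemma \ref{small_inv_lemma} on that low-frequency piece. The whole point of the proposition is that, at $v=0$, we avoid the potentially singular bound $\lVert L_{\omega,\alpha}^{-1}\rVert \lesssim \rho^{-2p}$ from Lemma \ref{invlemma}.

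Concretely, since $\partial_t[\rho\cos(t)\sin(x)]=-\rho\sin(t)\sin(x)$, at $v=0$ we have
\begin{equation*}
\mathcal{A}(0) = -\omega^{2p+1}\rho^{2p+1}\, L_{\omega,\alpha}^{-1}\,\Pi_{V^{s-1}}\bigl(\sin^{2p+1}(t)\sin^{2p+1}(x)\bigr).
\end{equation*}
By Lemma \ref{vanish}, $\Pi_{k>K_p}\Pi_{V^{s-1}}(\sin^{2p+1}(t)\sin^{2p+1}(x))=0$, so the function inside the inverse is supported on spatial frequencies with $k\le K_p$. In particular we may insert the projector $\Pi_{k\le K_p+1}$ for free. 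First I would apply Lemma \ref{small_inv_lemma} with $K=K_p+1$ (which forces the smallness of $W_1^2-W_0^2$ to depend only on $m,p$ since $K_p$ depends only on $p$) to conclude
\begin{equation*}
\lVert \mathcal{A}(0)\rVert_{V^s}\le \omega^{2p+1}\rho^{2p+1}\,C(p,m)\,\bigl\lVert \Pi_{V^{s-1}}\bigl(\sin^{2p+1}(t)\sin^{2p+1}(x)\bigr)\bigr\rVert_{V^{s-1}}.
\end{equation*}

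Next I would bound the remaining $V^{s-1}$-norm. Since $\sin^{2p+1}(t)\sin^{2p+1}(x)$ is a fixed trigonometric polynomial whose Fourier support is contained in a set determined by $p$, its $X^{s-1}$-norm (and hence $V^{s-1}$-norm after projection) is a finite constant depending only on $p$ and $s$. Under Assumption \ref{Assumption3} we have $s\le k_0+20$, so this constant is absorbed into $C(p,k_0)$. Finally, Assumption \ref{Assumption1} gives $\omega\le\sqrt{m+2}$, so $\omega^{2p+1}\le C(m,p)$. Multiplying these bounds together produces the claimed inequality $\lVert\mathcal{A}(0)\rVert_{V^s}\le C(p,m,k_0)\rho^{2p+1}$.

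There is no real obstacle here beyond correctly shepherding the dependencies: the proof is essentially a bookkeeping exercise once Lemma \ref{vanish} is invoked to avoid the bad high-frequency regime and Lemma \ref{small_inv_lemma} is used to bound the inverse uniformly in $\rho$ on the low-frequency regime. The only subtlety is to make sure that the smallness requirement on $W_1^2-W_0^2$ in Lemma \ref{small_inv_lemma} is applied with $K=K_p+1$, so that the implicit constants end up depending only on $m,p,k_0$ and not on $\rho$.
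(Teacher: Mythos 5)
Your proposal is correct and follows essentially the same route as the paper: both hinge on Lemma \ref{vanish} to kill the high-spatial-frequency piece of $\mathcal{A}(0)$ and then apply Lemma \ref{small_inv_lemma} with $K=K_p+1$ to get a $\rho$-independent bound on the inverse over the remaining low frequencies. The paper phrases this as a decomposition $\mathcal{A}=\mathcal{A}_{\le K}+\mathcal{A}_{>K}$ with $\mathcal{A}_{>K}(0)=0$ and uses the Banach algebra property to bound the remaining norm, whereas you insert the projector directly and note the inner function is a fixed trigonometric polynomial, but these are cosmetic differences.
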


\begin{proof}
We let $K_{p}$ be the parameter from Lemma \ref{vanish} and $K = K_{p} + 1$. We decompose $\mathcal{A}$ as follows:
\begin{equation}\label{A_decompose} 
    \mathcal{A}(v) = \mathcal{A}_{\leq K}(v) + \mathcal{A}_{> K}(v),
\end{equation}
where
\begin{align*}
   \mathcal{A}_{\leq K}(v) &= L_{\omega,\alpha}^{-1} \Pi_{k \leq K} \Pi_{V^{s-1}} \omega^{2p+1} \Big ( \partial_{t} [\rho \cos(t) \sin(x) + v(t,x) ] \Big )^{2p+1},\\
   \mathcal{A}_{>K}(v) &= L_{\omega,\alpha}^{-1} \Pi_{k > K} \Pi_{V^{s-1}} \omega^{2p+1} \Big ( \partial_{t} [\rho \cos(t) \sin(x) + v(t,x) ] \Big )^{2p+1}.
\end{align*}

In particular 

\begin{align*}
 \| \mathcal{A}(0) \|_{V^s} & \le \| \mathcal{A}_{\leq K}(0) \|_{V^s} + \| \mathcal{A}_{> K}(0) \|_{V^s}\\
& = \| L_{\omega,\alpha}^{-1} \Pi_{k \leq K}\Pi_{V^{s-1}} \omega^{2p+1} \Big ( -\rho \sin(t) \sin(x) \Big )^{2p+1} \|_{V^s}  \\
& + \| L_{\omega,\alpha}^{-1} \Pi_{k > K}\Pi_{V^{s-1}} \omega^{2p+1} \Big ( -\rho \sin(t) \sin(x) \Big )^{2p+1} \|_{V^s} \\
& = \| L_{\omega,\alpha}^{-1} \Pi_{k \leq K}\Pi_{V^{s-1}} \omega^{2p+1} \Big ( -\rho \sin(t) \sin(x) \Big )^{2p+1} \|_{V^s} \\
& \le C(p,m,k_0)\rho^{2p+1}
\end{align*}
having used Lemmas \ref{vanish}, \ref{small_inv_lemma}, the Banach algebra property \ref{algebra}, and Assumptions \ref{Assumption1} and \ref{Assumption3}.

\end{proof}

\begin{proposition}\label{contraction}
Let $\rho>0$ and $0<W_1^2-W_0^2$ be sufficiently small (depending on $p,m,k_0$). Then 
\begin{equation*}
    \mathcal{A}:V^{s} \to V^{s} \qquad \text{and} \qquad \mathcal{A}:\mathscr{B}_{V^{s}}(0,\rho^{2p+\frac{1}{2}}) \to \mathscr{B}_{V^{s}}(0,\rho^{2p+\frac{1}{2}}).
\end{equation*}
Moreover, if $\rho$ is sufficiently small (depending on $p,m,k_0,$) and $v,w \in \mathscr{B}_{V^{s}}(0,\rho^{2p+\frac{1}{2}})$, then 
\begin{equation}
\label{Acontracts}
    \| \mathcal{A}(v) - \mathcal{A}(w) \|_{V^{s}} \le \frac{1}{2} \| v - w \|_{V^{s}}.
\end{equation}
\end{proposition}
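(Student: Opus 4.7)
The plan is to choose a spatial-frequency threshold $K=K(p,m)>K_p$ (large, to be specified below) and decompose $\mathcal{A}$ as in \eqref{A_decompose}. The principal obstacle is the high-frequency inverse bound \eqref{invlemmaeq4}, whose leading part $1/(K^2\rho^{2p})$ blows up as $\rho\to 0$; this growth is tamed only because Lemma \ref{vanish} annihilates the dominant $(-\rho\sin(t)\sin(x))^{2p+1}$ contribution on the high-frequency side, leaving only terms that carry an extra factor of $\partial_t v$ (or $\partial_t(v-w)$ in the Lipschitz estimate). That factor supplies the required $\rho^{2p+1/2}$ of smallness built into the ansatz. The bare self-map property $\mathcal{A}:V^s\to V^s$ is routine bookkeeping: $\partial_t$ takes $V^s$ into $V^{s-1}$, the Banach algebra \eqref{algebra} (applicable because Assumption \ref{Assumption3} gives $s-1>1$) places $(\partial_t[\rho\cos(t)\sin(x)+v])^{2p+1}$ in $X^{s-1}$, and Lemma \ref{invlemma} at regularity $s-1$ brings us back to $V^s$.

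For invariance of $\mathscr{B}_{V^s}(0,\rho^{2p+1/2})$, write
\[
\mathcal{A}(v)=\mathcal{A}(0)+\bigl[\mathcal{A}_{\le K}(v)-\mathcal{A}_{\le K}(0)\bigr]+\mathcal{A}_{>K}(v),
\]
using $\mathcal{A}_{>K}(0)=0$ from Lemma \ref{vanish}. The first piece is bounded by $C(p,m,k_0)\rho^{2p+1}$ via Proposition \ref{nonlinear_zero}. For the second, apply the telescoping identity
\[
(\partial_t u_v)^{2p+1}-(\partial_t u_0)^{2p+1}=\partial_t v\sum_{j=0}^{2p}(\partial_t u_v)^{2p-j}(\partial_t u_0)^j,
\]
where $u_v=\rho\cos(t)\sin(x)+v$ and $u_0=\rho\cos(t)\sin(x)$, together with \eqref{algebra} and $\|\partial_t u_v\|_{X^{s-1}},\|\partial_t u_0\|_{X^{s-1}}\lesssim\rho$ (valid for $\|v\|_{V^s}\le\rho^{2p+1/2}\le\rho$), to estimate the $V^{s-1}$-norm of the right-hand side by $C(p)\rho^{2p}\cdot\rho^{2p+1/2}=C(p)\rho^{4p+1/2}$; Lemma \ref{small_inv_lemma} then yields $\|\mathcal{A}_{\le K}(v)-\mathcal{A}_{\le K}(0)\|_{V^s}\lesssim_{K,m,p}\rho^{4p+1/2}$.

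For the third piece, expand $(\partial_t u_v)^{2p+1}$ by the binomial theorem. Lemma \ref{vanish} annihilates the $j=0$ term under $\Pi_{k>K}\Pi_{V^{s-1}}$, and every remaining term carries at least one factor of $\partial_t v$, hence has $X^{s-1}$-norm $\lesssim_p\rho^{2p}\|v\|_{V^s}\le\rho^{4p+1/2}$. Combining with \eqref{invlemmaeq4},
\[
\|\mathcal{A}_{>K}(v)\|_{V^s}\le\Bigl(\frac{C_1(p,m)}{K^2\rho^{2p}}+2\Bigr)C_2(p)\rho^{4p+1/2}=\frac{C_1(p,m)C_2(p)}{K^2}\rho^{2p+1/2}+O(\rho^{4p+1/2}).
\]
Fix $K=K(p,m)$ so large that $C_1(p,m)C_2(p)/K^2<1/4$, then shrink $\rho$ so that $C(p,m,k_0)\rho^{2p+1}$, $C(K,m,p)\rho^{4p+1/2}$, and the $O(\rho^{4p+1/2})$ above each sit under $\rho^{2p+1/2}/4$. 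The three pieces sum to at most $\rho^{2p+1/2}$, giving invariance.

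The Lipschitz bound \eqref{Acontracts} follows the same template, with the analogous identity using $u_v,u_w$ in place of $u_v,u_0$: a factor of $\partial_t(v-w)$ replaces $\partial_t v$, and the sum is bounded in $V^{s-1}$ by $\lesssim_p\rho^{2p}$. The low-frequency Lipschitz constant $C(K,m,p)\rho^{2p}$ is $<1/4$ for $\rho$ small, while the high-frequency Lipschitz constant $(C_1(p,m)/(K^2\rho^{2p})+2)\rho^{2p}=C_1(p,m)/K^2+O(\rho^{2p})$ is $<1/4$ once $K=K(p,m)$ is large and $\rho$ is small. Summing yields \eqref{Acontracts}.
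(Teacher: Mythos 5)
Your argument is correct and follows essentially the same route as the paper's: the same frequency-threshold decomposition at $K>K_p$, the bounds of Lemma \ref{invlemma} and Lemma \ref{small_inv_lemma} on the two pieces of $L_{\omega,\alpha}^{-1}$, the cancellation of the $j=0$ term via Lemma \ref{vanish}, and the choice of $K$ large and then $\rho$ small in the same order. The only differences are cosmetic: you write out telescoping identities and estimate each term directly, where the paper bounds $\|D\mathcal{A}(v)\|_{\mathcal{L}(V^s,V^s)}\le\tfrac12$ and invokes the Mean Value Inequality; and you prove ball invariance directly by decomposing $\mathcal{A}(v)$ into three pieces, whereas the paper proves the contraction estimate first and then deduces invariance from $\|\mathcal{A}(v)\|\le\|\mathcal{A}(v)-\mathcal{A}(0)\|+\|\mathcal{A}(0)\|$ together with Proposition \ref{nonlinear_zero}. (One trivial bookkeeping remark: your Banach-algebra constants carry a $2^{2s}$, hence depend on $k_0$ through Assumption \ref{Assumption3}, so $K$ and $C_2$ should be labelled $K(p,m,k_0)$ and $C_2(p,k_0)$; this does not affect the logic.)
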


\begin{proof}
From Lemma \ref{invlemma} it follows that $\mathcal{A}:V^{s} \to V^{s}$.  We first prove \eqref{Acontracts}. Note that from \ref{nonlinear_map} we have 
\begin{equation*}
    D \mathcal{A}(v) \zeta =  L_{\omega, \alpha}^{-1} \Pi_{V^{s-1}} \left [\omega^{2p+1} (2p + 1) \Big ( \partial_{t} [ \rho \cos(t) \sin(x) + v] \Big )^{2p} \partial_{t}\zeta \right ]
\end{equation*}
It suffices to show that for $v\in \mathscr{B}_{V^{s}}(0,\rho^{2p+\frac{1}{2}})$, 
$\|D \mathcal{A}(v)\|_{\mathcal{L}(V^{s}, V^{s})}\le \frac{1}{2}$, whence the contraction estimate holds thanks to the Mean Value Inequality. Let $K$ be a positive integer which will be chosen later.
\begin{align*}
    & D \mathcal{A}(v) \zeta \\
    & =  L_{\omega, \alpha}^{-1} \Pi_{V^{s-1}} \left [\omega^{2p+1} (2p + 1) \Big ( \partial_{t} [ \rho \cos(t) \sin(x) + v] \Big )^{2p} \partial_{t}\zeta \right ]\\
    & = L_{\omega, \alpha}^{-1} \Pi_{k\le K} \Pi_{V^{s-1}}  \left [\omega^{2p+1} (2p + 1) \Big ( \partial_{t} [ \rho \cos(t) \sin(x) + v] \Big )^{2p} \partial_{t}\zeta \right ]\\
    & + L_{\omega, \alpha}^{-1} \Pi_{k> K} \Pi_{V^{s-1}} \left [\omega^{2p+1} (2p + 1) \Big ( \partial_{t} [ \rho \cos(t) \sin(x) + v] \Big )^{2p} \partial_{t}\zeta \right ]\\
    & = (D \mathcal{A}(v) \zeta)_{k\le K} + (D \mathcal{A}(v) \zeta)_{k> K},
\end{align*}

where the terms of the last line are defined in the corresponding way by the terms from the line preceding it.

If $v\in \mathscr{B}_{V^{s}}(0,\rho^{2p+\frac{1}{2}})$ then for any $\zeta\in V^{s}$ using \eqref{algebra}, Lemma \ref{invlemma}, Assumptions \ref{Assumption1} and  \ref{Assumption3}, as well as $\|\partial_{t}\zeta\|_{V^{s-1}}\lesssim \|\zeta\|_{V^{s}}$, we have 
\begin{align*}
     &\| (D \mathcal{A}(v)\zeta)_{k > K} \|_{V^{s}} \le \\ 
     & C(p,m,k_0) (\frac{1}{K^2\rho^{2p}} +2) 
     \left \| \Pi_{V^{s-1}}\left [ \Big ( -\rho \sin(t) \sin(x) + \partial_{t} v \Big )^{2p} \partial_{t}\zeta \right ] \right \|_{V^{s-1}}\\
     & \le C(p,m,k_0) \frac{1}{K^2\rho^{2p}}\left \| ( -\rho \sin(t) \sin(x) + \partial_{t} v \Big )^{2p} \right \|_{V^{s-1}} \left \| \partial_{t}\zeta \right \|_{V^{s-1}}\\
     & + 2C(p,m,k_0)\left \| ( -\rho \sin(t) \sin(x) + \partial_{t} v \Big )^{2p} \right \|_{V^{s-1}} \left \| \partial_{t}\zeta \right \|_{V^{s-1}}\\
     & \le C(p,m,k_0) \frac{\rho^{2p}}{K^2\rho^{2p}}\left \|\zeta \right \|_{V^{s}} + 2C(p,m,k_0)\rho^{2p}\left \|\zeta \right \|_{V^{s}}\\ = & C(p,m,k_0) \frac{1}{K^2}\left \|\zeta \right \|_{V^{s}} + 2C(p,m,k_0)\rho^{2p}\left \|\zeta \right \|_{V^{s}}.
\end{align*}     
\begin{remark}
The constants $C$ above may change from line to line but only depend on the parameters in the parentheses.
\end{remark} 

Hence we can take $K$ to be large enough (depending only on $p,m,k_0$) so that $C(p,m,k_0)\frac{1}{K^2}\le \frac{1}{8}$ as well as $\rho\in (0,1)$ small enough (depending on $p,m,k_0$ to ensure $2C(p,m,k_0)\rho^{2p}\le \frac{1}{8}$.

Having selected $K=K(p,m,k_0)$, next we have by Lemma \ref{small_inv_lemma}, using $v\in \mathscr{B}_{V^{s}}(0,\rho^{2p+\frac{1}{2}})$, \eqref{algebra}, Assumptions \ref{Assumption1} and \ref{Assumption3}, and $\|\partial_{t}\zeta\|_{V^{s-1}}\lesssim \|\zeta\|_{V^{s}}$ ,
\begin{align*}
     &\| (D \mathcal{A}(v)\zeta)_{k \le K} \|_{V^{s}} \le 
     C(p,m,k_0)\left \| \Pi_{V^{s-1}}\left [ \Big ( -\rho \sin(t) \sin(x) + \partial_{t} v \Big )^{2p} \partial_{t}\zeta \right ] \right \|_{V^{s-1}}\\
     & \le C(p,m,k_0)\rho^{2p}\left \|\zeta \right \|_{V^{s}}.
\end{align*}     

We choose $\rho\in(0,1)$ (depending only on $p,m,k_0$) small enough so that $C(p,m,k_0)\rho^{2p}\le \frac{1}{4}$. It follows that 
$$\sup_{v\in \mathscr{B}_{V^{s}}(0,\rho^{2p+\frac{1}{2}})}\| D \mathcal{A}(v)\|_{\mathcal{L}(V^{s},V^{s})}\le\frac{1}{2}.$$ 

Finally, to complete the first part of the proof suppose that $v\in \mathscr{B}_{V^{s}}(0,\rho^{2p + \frac{1}{2}})$ . We
first observe from Proposition \ref{nonlinear_zero} that 
$$\| \mathcal{A}(0) \|_{V^{s}} \le C(m,p,k_0)\rho^{2p+1}$$
and hence using the above we have
\begin{align*}
    & \| \mathcal{A}(v) \|_{V^{s}} \le \| \mathcal{A}(v) - \mathcal{A}(0) \|_{V^{s}} + \| \mathcal{A}(0) \|_{V^{s}} \\
    & \le 
     \frac{1}{2}\| v\|_{V^{s}} + \| \mathcal{A}(0) \|_{V^{s}} \le \frac{1}{2}\rho^{2p+\frac{1}{2}} + C(m,p,k_0)\rho^{2p+1} \le \rho^{2p+\frac{1}{2}}
\end{align*}
provided $$C(m,p,k_0)\rho^{\frac{1}{2}} \le \frac{1}{2}$$
which can be accomplished if $\rho\in (0,1)$ is chosen small enough, depending only on $m,p,k_0$.

\end{proof}

Altogether we are lead to the following result.

\begin{proposition}
\label{good_re_sol}
Under Assumptions \ref{Assumption1}, \ref{Assumption2}, and \ref{Assumption3} if $\rho>0$ and $0<W_1^2-W_0^2$ are sufficiently small (depending on $p,m,k_0$)  the range equation \eqref{rangeeq_g} (as well as \eqref{inv_rangeeq_g}) has a solution $v(\rho, \omega, \alpha)$ satisfying
\begin{equation}
\label{v_estimate}
    \| v \|_{V^{s}} \le \rho^{2p+\frac{1}{2}}.
\end{equation}

\end{proposition}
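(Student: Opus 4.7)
The plan is to deduce this proposition as an essentially immediate consequence of the contraction estimates already established. By Lemma \ref{invlemma}, the operator $L_{\omega,\alpha}^{-1}$ is a genuine inverse to $L_{\omega,\alpha}$ when restricted to $V^{s}$, so the range equation \eqref{rangeeq_g} is equivalent to the fixed-point formulation \eqref{inv_rangeeq_g}, which in turn is simply $v=\mathcal{A}(v)$ for the nonlinear operator $\mathcal{A}$ defined in \eqref{nonlinear_map}. Thus it suffices to produce a fixed point of $\mathcal{A}$ in the ball $\overline{\mathscr{B}_{V^{s}}(0,\rho^{2p+\frac{1}{2}})}$ of the prescribed radius.

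First I would observe that $V^{s}$, being the orthogonal complement in the Hilbert space $X^{s}$ of the finite-dimensional kernel $\ker L_{\omega_{1},0}$, is itself a closed subspace and hence a Hilbert space under the induced inner product. Consequently the closed ball $\overline{\mathscr{B}_{V^{s}}(0,\rho^{2p+\frac{1}{2}})}$ is a complete metric space in the $V^{s}$-norm, which is the setting required for the Banach contraction mapping principle.

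Next I would invoke Proposition \ref{contraction}, choosing $\rho>0$ and $W_{1}^{2}-W_{0}^{2}>0$ sufficiently small (depending only on $p,m,k_{0}$) to ensure simultaneously that $\mathcal{A}$ maps the closed ball $\overline{\mathscr{B}_{V^{s}}(0,\rho^{2p+\frac{1}{2}})}$ into itself and satisfies the contraction estimate $\|\mathcal{A}(v)-\mathcal{A}(w)\|_{V^{s}}\le \tfrac{1}{2}\|v-w\|_{V^{s}}$ on that ball. The Banach fixed-point theorem then produces a unique $v=v(\rho,\omega,\alpha)\in \overline{\mathscr{B}_{V^{s}}(0,\rho^{2p+\frac{1}{2}})}$ with $v=\mathcal{A}(v)$, and by construction this $v$ obeys the bound \eqref{v_estimate}. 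Applying $L_{\omega,\alpha}$ to both sides of $v=\mathcal{A}(v)$ and using Lemma \ref{invlemma} recovers the range equation \eqref{rangeeq_g} in its original form.

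There is effectively no real obstacle left in this proposition: the genuine work — controlling $\mathcal{A}(0)$ via the vanishing integral (Lemma \ref{vanish}) combined with the coarse bound from Lemma \ref{small_inv_lemma}, and taming the differential $D\mathcal{A}$ by splitting into high and low spatial frequencies and using Lemma \ref{invlemma} together with Assumption \ref{Assumption2} — was already carried out in Propositions \ref{nonlinear_zero} and \ref{contraction}. The only point that merits a line of care is confirming that one may indeed pick $\rho$ and the spectral window $W_{1}^{2}-W_{0}^{2}$ small enough so that the smallness conditions appearing in Proposition \ref{contraction} hold simultaneously, but these are finitely many conditions each depending only on $p$, $m$, and $k_{0}$.
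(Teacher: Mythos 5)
Your proposal is correct and follows essentially the same route as the paper: both invoke Proposition \ref{contraction} to conclude that $\mathcal{A}$ is a self-mapping contraction on the ball of radius $\rho^{2p+\frac{1}{2}}$ and then apply the Banach fixed-point theorem to obtain the solution $v(\rho,\omega,\alpha)$ of the range equation with the stated bound. Your additional remarks (completeness of the closed ball in the Hilbert space $V^{s}$, equivalence of \eqref{rangeeq_g} and \eqref{inv_rangeeq_g} via Lemma \ref{invlemma}) are sensible bits of care that the paper leaves implicit but do not change the argument.
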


 Using Proposition \ref{contraction} we see that the map $\mathcal{A}$ is a contraction on $\mathscr{B}_{V^{s}}(0,\rho^{2p+\frac{1}{2}})$, and hence has a fixed point inside this ball which is a solution of the range equations \eqref{rangeeq_g}, \eqref{inv_rangeeq_g} which clearly depends on $(\rho, \omega, \alpha)$.

 Next we derive estimates on the derivatives of the nonlinear map defined in \eqref{nonlinear_map} as well as the solution of the range equation granted above, which will be handy for solving the bifurcation equations downstream. 

\begin{lemma}\label{est_der_nonlinear_map}
 Let $v$ be the solution of the range equation granted in Proposition \ref{good_re_sol}. Omitting the dependence on $\rho, \omega$ for the sake of brevity we have that under Assumptions \ref{Assumption1}, \ref{Assumption2}, and \ref{Assumption3} if $\rho>0$ and $0<W_1^2-W_0^2$ are sufficiently small (depending on $p,m,k_0$) then
\begin{equation*}    
    \Big \| \partial_{\alpha}\mathcal{A}(\alpha,v)\Big \vert _{v=v(\alpha)} \Big \|_{V^s}\le C(p,m,k_0)\rho^{\frac{1}{2}} \end{equation*}
for some $C(p,m,k_0)>0$  where $\mathcal{A}$ is given in \eqref{nonlinear_map}, but we call out the dependence on $\alpha$ explicitly here.
\end{lemma}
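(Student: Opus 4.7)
The plan is to exploit the fact that $\mathcal{A}(\alpha,v)$ depends on $\alpha$ only through the linear inverse $L_{\omega,\alpha}^{-1}$; the source $g(v) := \Pi_{V^{s-1}}\omega^{2p+1}\bigl(\partial_t[\rho\cos(t)\sin(x) + v]\bigr)^{2p+1}$ is independent of $\alpha$ when $v$ is held fixed. Differentiating \eqref{invexp} in $\alpha$ and using $\partial_\alpha\vartheta = i\omega nk^2$ yields the Fourier-multiplier formula
\[
\partial_\alpha L_{\omega,\alpha}^{-1} f = -\sum_{(n,k)\in A}\frac{i\omega n k^2 f_{n,k}}{\vartheta(n,k,m,\omega,\alpha)^2}e^{int}\sin(kx),
\]
so the task reduces to showing $\|(\partial_\alpha L_{\omega,\alpha}^{-1})g(v(\alpha))\|_{V^s}\lesssim \rho^{1/2}$. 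I would then split $g = \Pi_{k\le K_p}g + \Pi_{k>K_p}g$, where $K_p$ is the threshold from Lemma \ref{large_k_zeros}, mirroring the structure already used in Proposition \ref{contraction}, and treat the two pieces by entirely different means.

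For the low-frequency piece, the plan is to mimic the two-case argument from the proofs of Lemma \ref{QRbound_small_k} and Lemma \ref{small_inv_lemma}. Either $|n|\le A(K_p,m,p)$, in which case the irrationality of $m$ together with smallness of $W_1^2-W_0^2$ forces $|\vartheta|$ to be bounded below by a positive constant; or $|n|$ exceeds this threshold, in which case $|k^2 - \omega^2 n^2 + m|\gtrsim n^2$ and hence $|\vartheta|^4\gtrsim n^8$. Together with $k\le K_p$ bounded, this gives the pointwise estimate
\[
\frac{(n^{2s}+k^{2s})\,\omega^2 n^2 k^4}{(n^{2(s-1)}+k^{2(s-1)})\,|\vartheta|^4}\le C(K_p,m,p),
\]
since the large-$|n|$ regime produces an $n^{-4}$ decay in the ratio that absorbs the $n^{2s}$ growth coming from the target norm. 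This yields the $\rho$-independent bound $\|(\partial_\alpha L_{\omega,\alpha}^{-1})\Pi_{k\le K_p}\|_{\mathcal{L}(V^{s-1},V^s)}\le C(K_p,m,p)$. Combined with the crude estimate $\|g\|_{V^{s-1}}\lesssim \rho^{2p+1}$ obtained from the Banach algebra \eqref{algebra} and $\|v\|_{V^s}\le\rho^{2p+1/2}$, the low-frequency contribution is $O(\rho^{2p+1})$, much smaller than $\rho^{1/2}$.

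For the high-frequency piece, I would use the vanishing Lemma \ref{vanish} to cancel the $v$-independent leading term. Binomially expanding
\[
\bigl(\partial_t[\rho\cos(t)\sin(x) + v]\bigr)^{2p+1} = \sum_{j=0}^{2p+1}\binom{2p+1}{j}(-\rho\sin(t)\sin(x))^{2p+1-j}(\partial_t v)^j,
\]
the $j=0$ term $(-\rho\sin(t)\sin(x))^{2p+1}$ is annihilated by $\Pi_{k>K_p}\Pi_V$, so every remaining summand carries at least one factor of $\partial_t v$. The Banach algebra together with $\|v\|_{V^s}\le\rho^{2p+1/2}$ therefore gives $\|\Pi_{k>K_p}g\|_{V^{s-1}}\lesssim \rho^{2p}\|v\|_{V^s}\lesssim \rho^{4p+1/2}$ (the $j=1$ term dominates since $2p-\tfrac12>0$). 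On the other hand, the crude bound $\|\partial_\alpha L_{\omega,\alpha}^{-1}\|_{\mathcal{L}(V^{s-1},V^s)}\lesssim \rho^{-4p}$ follows from $|\vartheta|^2\ge \omega^2\alpha^2 n^2 k^4$ (so $\omega^2 n^2 k^4/|\vartheta|^2\le 1/\alpha^2\lesssim \rho^{-4p}$ via Assumption \ref{Assumption2}) combined with $\max(n^2,k^2)/|\vartheta|^2\lesssim \rho^{-4p}$ from Lemma \ref{invlemma}. The high-frequency contribution is thus $\lesssim \rho^{4p+1/2}\cdot\rho^{-4p}=\rho^{1/2}$, and summing the two pieces yields the claim.

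The main obstacle I anticipate is the uniform-in-$\rho$ low-frequency bound: the multiplier $\omega n k^2/\vartheta^2$ is not exactly of the form $Q(n,k)$ or $R(n,k)$ covered by Lemma \ref{QRbound_small_k}, so the two-case split must be carried out by hand, and one has to verify that the potentially dangerous factor $n^{2s}$ from the $V^s$ target norm is indeed absorbed either by the lower bound $|\vartheta|^4\gtrsim n^8$ in the large-$|n|$ regime or by the matching $n^{2(s-1)}$ coming from the $V^{s-1}$ source norm. Once this low-frequency bound is secured, the high-frequency estimate is a routine combination of the vanishing lemma with Assumption \ref{Assumption2}.
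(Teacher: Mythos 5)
Your proposal is correct and yields the same estimates, but it organizes the argument around a different decomposition than the paper's. The paper splits the nonlinearity by \emph{power of $\partial_t v$}: it binomially expands $\bigl(\partial_t[\rho\cos t\sin x+v]\bigr)^{2p+1}$ into a $j=0$ term $(I)=(\partial_\alpha L^{-1})\Pi_{V^{s-1}}\omega^{2p+1}(-\rho\sin t\sin x)^{2p+1}$ and a $j\ge 1$ remainder $(II)$. For $(I)$, the paper uses Lemma \ref{vanish} to note that only modes with $k\le K_p+1$ survive, then observes that on those modes the multiplier of $\partial_\alpha L^{-1}$ is dominated by $D(p,m)Q(n,k)^2$ and $D(p,m)R(n,k)^2$, which are $\rho$-uniformly bounded by Lemma \ref{QRbound_small_k}; this gives $\|(I)\|_{V^s}\lesssim\rho^{2p+1}$. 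For $(II)$, it uses the crude bound $\|\partial_\alpha L^{-1}\|_{\mathcal{L}(V^{s},V^{s+1})}\lesssim\rho^{-4p}$ together with $\|v\|_{V^s}\le\rho^{2p+\frac12}$, giving $\|(II)\|_{V^s}\lesssim\rho^{\frac12}$. You instead split by \emph{spatial frequency}, $g=\Pi_{k\le K_p}g+\Pi_{k>K_p}g$, mirroring Proposition \ref{contraction}: for the low-$k$ piece you prove a $\rho$-uniform operator bound on $(\partial_\alpha L^{-1})\Pi_{k\le K_p}$ by the same two-case argument underlying Lemma \ref{QRbound_small_k}, while for the high-$k$ piece Lemma \ref{vanish} annihilates the $j=0$ term, leaving contributions carrying a factor $\partial_t v$ that absorb the $\rho^{-4p}$ loss. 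These are dual formulations of the same idea — the dangerous combination is precisely ``$j=0$ together with $k>K_p$'', and both decompositions isolate and kill it via Lemma \ref{vanish}. Your version is arguably a bit more systematic in that the low-$k$ operator bound is stated as a clean mapping property rather than applied only to the specific trigonometric polynomial; the paper's version is a bit leaner since it never needs to prove that stand-alone bound, only to control the multiplier on a finite set of modes. The estimates you write (the pointwise bound $\frac{(n^{2s}+k^{2s})\omega^2 n^2 k^4}{(n^{2(s-1)}+k^{2(s-1)})|\vartheta|^4}\le C(K_p,m,p)$ for $k\le K_p$, and $\|\Pi_{k>K_p}g\|_{V^{s-1}}\lesssim\rho^{4p+\frac12}$, $\|\partial_\alpha L^{-1}\|\lesssim\rho^{-4p}$) are all correct and match what the paper obtains.
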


\begin{proof}
    We see that for general $v\in V^s$
    \begin{align}\label{partial_A_alpha}
  \partial_{\alpha}\mathcal{A}(\alpha,v) = (\partial_{\alpha}L_{\omega,\alpha}^{-1}) \Pi_{V^{s-1}} \omega^{2p+1} \Big ( \partial_{t} [\rho \cos(t) \sin(x) + v(t,x) ] \Big )^{2p+1}       
  \end{align}
  and it is not difficult to see that for any $v\in V^s$

  \begin{align*}
      \partial_{\alpha}L_{\omega,\alpha}^{-1}v = \sum_{n,k} \frac{-i \omega n k^2}{(k^2+m-\omega^2n^2+i\omega n \alpha k^2)^2}v_{nk}.
  \end{align*}
   See the appendix of \cite{kospigott_highd_sym} for details. We establish the following properties:  
   \begin{equation*}
       \mbox{if} \ v\in V^s \ \mbox{then} \   \partial_{\alpha}L_{\omega,\alpha}^{-1}v \in V^{s+1},\; \mbox{and} \; \| \partial_{\alpha}L_{\omega,\alpha}^{-1} \|_{\mathcal{L}(V^s,V^{s+1})} \le C(p,m,k_0) \frac{1}{W_0 \rho^{4p}}
   \end{equation*}

   To see this it suffices to show that the quantities
   \begin{align}\label{alpha_rational_exp}
& n^2 \cdot \frac{\omega^2 n^2 k^4}{((k^2+m-\omega^2n^2)^2+(\omega n \alpha k^2)^2)^2},
  k^2 \cdot \frac{\omega^2 n^2 k^4}{((k^2+m-\omega^2n^2)^2+(\omega n \alpha k^2)^2)^2}
  \end{align}
  are bounded by an appropriate constant. It is easily checked that for both we have the upper bound $\frac{1}{\omega^2\alpha^4}$ from which the second property above follows from Assumption \ref{Assumption2} (and taking the square root).

  Finally,  by \eqref{partial_A_alpha} 
  \begin{align*}
 &\partial_{\alpha}\mathcal{A}(\alpha,v)\\ 
 = &(\partial_{\alpha}L_{\omega,\alpha}^{-1}) \Pi_{V^{s-1}} \omega^{2p+1} \Big ( (-\rho \sin(t)\sin(x))^{2p+1} \\
 &\qquad \qquad \qquad \quad \qquad + \sum_{j=1}^{2p+1} \binom{2p+1}{j}(-\rho \sin(t)\sin(x))^{2p+1-j}(\partial_t v)^{j} \Big ) \\
& = (\partial_{\alpha}L_{\omega,\alpha}^{-1}) \Pi_{V^{s-1}} \omega^{2p+1} (-\rho \sin(t)\sin(x))^{2p+1}  \\
& + (\partial_{\alpha}L_{\omega,\alpha}^{-1}) \Pi_{V^{s-1}} \omega^{2p+1} \Big (\sum_{j=1}^{2p+1} \binom{2p+1}{j}(-\rho \sin(t)\sin(x))^{2p+1-j}(\partial_t v)^{j} \Big ) \\
& =: (I) + (II)
  \end{align*}
   For the first term $(I)$ we see that by Lemma \ref{vanish} we see that we need to only consider at most $K_p+1$ modes in $k$, that is, 
   \begin{equation*}
   (I)= (\partial_{\alpha}L_{\omega,\alpha}^{-1}) \Pi_{k\le K_p+1}\Pi_{V^{s-1}} \omega^{2p+1} (-\rho \sin(t)\sin(x))^{2p+1}. 
   \end{equation*}
    To obtain a bound for this we observe that the quantities in \ref{alpha_rational_exp} are in fact bounded by $D(p,m)Q(n,k)^2, D(p,m)R(n,k)^2$ respectively for some constant $D(p,m)>0$, where $Q(n,k), R(n,k)$ are the rational expressions defined in \eqref{rational_exp} and the latter are, by Lemma \ref{QRbound_small_k}, bounded by a constant depending only on $p,m$. This means that $\| (I) \|_{V^{s}}\le D(p,m)\rho^{2p+1}$ for some possibly different $D(p,m)>0$.

   Turning to $(II)$ and recalling we are interested in the case $v=v(\alpha)$ we have that since $\| v(\alpha) \|_{V^s} \le \rho^{2p+\frac{1}{2}}$
   \begin{align*}
       & \Big \| (\partial_{\alpha}L_{\omega,\alpha}^{-1}) \Pi_{V^{s-1}} \omega^{2p+1} \Big (\sum_{j=1}^{2p+1} \binom{2p+1}{j}(-\rho \sin(t)\sin(x))^{2p+1-j}(\partial_t v)^{j} \Big )\Big \|_{V^s} \\
       & \le E(p,m,k_0)\frac{1}{\rho^{4p}}\sum_{j=1}^{2p+1}\rho^{2p+1-j}(\rho^{2p+\frac{1}{2}})^j\le E(p,m,k_0)\rho^{\frac{1}{2}}
   \end{align*}
   for some $E(p,m,k_0)>0$. Hence
   $$\Big \| \partial_{\alpha}\mathcal{A}(\alpha,v)\Big \vert _{v=v(\alpha)} \Big \|_{V^s}\le D(p,m)\rho^{2p+1}+ E(p,m,k_0)\rho^\frac{1}{2}\le 2E(p,m,k_0)\rho^{\frac{1}{2}}$$

   giving the desired result if $\rho\in(0,1)$ is small enough depending only on $p,m,k_0$.
\end{proof}

 \begin{proposition}\label{est_der_rangeeq}
     Omitting the dependence on $\rho, \omega$ for the sake of brevity we have that under Assumptions \ref{Assumption1}, \ref{Assumption2}, and \ref{Assumption3} if $\rho>0$ and $0<W_1^2-W_0^2$ are sufficiently small (depending on $p,m,k_0$) then $\| \partial_{\alpha} v(\alpha) \|_{V^s}\le C(p,m,k_0) \rho^\frac{1}{2}$
 \end{proposition}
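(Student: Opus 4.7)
The plan is to differentiate the fixed point equation in $\alpha$ and then use that $I-D_v\mathcal{A}$ is invertible with a uniformly bounded inverse. Recall from Proposition \ref{good_re_sol} that $v=v(\alpha)$ is the unique fixed point in $\mathscr{B}_{V^s}(0,\rho^{2p+\frac{1}{2}})$ of the map $v\mapsto \mathcal{A}(\alpha,v)$. I would first observe that, because $\mathcal{A}$ depends smoothly on the parameter $\alpha$ (since $\alpha\mapsto L_{\omega,\alpha}^{-1}$ is $C^1$ into $\mathcal{L}(V^{s-1},V^s)$ on the relevant parameter range, as used in Lemma \ref{est_der_nonlinear_map}) and since Proposition \ref{contraction} gives $\|D_v\mathcal{A}(\alpha,v)\|_{\mathcal{L}(V^s,V^s)}\le\tfrac12$ uniformly for $v\in\mathscr{B}_{V^s}(0,\rho^{2p+\frac{1}{2}})$, the uniform contraction principle (or the implicit function theorem applied to $F(\alpha,v)=v-\mathcal{A}(\alpha,v)$) yields that $\alpha\mapsto v(\alpha)$ is $C^1$.

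Next I would differentiate the identity $v(\alpha)=\mathcal{A}(\alpha,v(\alpha))$ in $\alpha$ using the chain rule to get
\begin{equation*}
\partial_\alpha v(\alpha) \;=\; \bigl(\partial_\alpha \mathcal{A}\bigr)(\alpha,v(\alpha)) \;+\; D_v\mathcal{A}(\alpha,v(\alpha))\,\partial_\alpha v(\alpha),
\end{equation*}
which rearranges as
\begin{equation*}
\bigl(I-D_v\mathcal{A}(\alpha,v(\alpha))\bigr)\partial_\alpha v(\alpha) \;=\; \bigl(\partial_\alpha \mathcal{A}\bigr)(\alpha,v(\alpha)).
\end{equation*}
Since $\|D_v\mathcal{A}(\alpha,v(\alpha))\|_{\mathcal{L}(V^s,V^s)}\le\tfrac12$ on the ball of interest, the Neumann series shows that $I-D_v\mathcal{A}(\alpha,v(\alpha))$ is invertible on $V^s$ with operator norm of its inverse bounded by $2$.

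Combining these observations, I would conclude
\begin{equation*}
\|\partial_\alpha v(\alpha)\|_{V^s} \;\le\; 2\,\bigl\|\bigl(\partial_\alpha \mathcal{A}\bigr)(\alpha,v)\big|_{v=v(\alpha)}\bigr\|_{V^s} \;\le\; 2C(p,m,k_0)\rho^{\frac{1}{2}},
\end{equation*}
where the last inequality is exactly the content of Lemma \ref{est_der_nonlinear_map}. Renaming the constant completes the estimate. The only mildly subtle point—and really the one piece of work beyond bookkeeping—is the justification of the $C^1$ dependence of the fixed point on $\alpha$, but this is standard from the uniform contraction setup already in place via Proposition \ref{contraction}; the rest is the inverse-Neumann argument plus a direct application of the preceding lemma.
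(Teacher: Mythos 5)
Your argument is correct and follows essentially the same route as the paper: differentiate the fixed-point identity in $\alpha$, invert $I - D_v\mathcal{A}$ via a Neumann series using the $\tfrac12$-contraction bound from Proposition \ref{contraction}, and then apply Lemma \ref{est_der_nonlinear_map}. (Incidentally, your sign in $\partial_\alpha v(\alpha) = (I-D_v\mathcal{A})^{-1}\partial_\alpha\mathcal{A}$ is the correct one; the paper's displayed formula carries a stray minus sign, which is of course immaterial to the norm estimate.)
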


 \begin{proof}
 We have 
\begin{equation*}
    v(\alpha) - \mathcal{A}(\alpha,v(\alpha)) = 0,
\end{equation*}
where $\mathcal{A}$ is given in \eqref{nonlinear_map} but we call out the dependence on $\alpha$ explicitly here.  
We remark that the invertibility of
\begin{equation*}
    \partial_{v} \Big ( v - \mathcal{A}(\alpha,v) \Big ) \Big \vert_{v =v(\alpha)}= \Big (I_{V^s}-\partial_{v}\mathcal{A}(v)\Big )\Big \vert _{v=v(\alpha)}
\end{equation*}
and the fact that 

\begin{equation*}
    \Big \| \Big (I_{V^s}-\partial_{v}\mathcal{A}(v)\Big )^{-1}\Big \vert _{v=v(\alpha)}\Big \|_{\mathcal{L}(V^s,V^s)}\le 2
\end{equation*}
follows from a Neumann series argument using the estimate \eqref{v_estimate} on $v$, the same argument as in the first part of the proof of Proposition \ref{contraction} (where we showed there that $\left \| \partial_{v}\mathcal{A}(v)\Big \vert _{v=v(\alpha)} \right \|_{\mathcal{L}(V^s,V^s)}\le \frac{1}{2})$,  and taking $\rho>0$ to be sufficiently small.  

Hence it follows that 
\begin{align*}
    & \partial_{\alpha}v(\alpha) = -\Big (I_{V^s}-\partial_{v}\mathcal{A}(v)\Big )^{-1}\Big \vert _{v=v(\alpha)}\partial_{\alpha}\mathcal{A}(\alpha,v)\Big \vert _{v=v(\alpha)} \implies \\
    & \| \partial_{\alpha}v \|_{V^s} \le 2 \Big \| \partial_{\alpha}\mathcal{A}(\alpha,v)\Big \vert _{v=v(\alpha)} \Big \|_{V^s}\le C(p,m,k_0)\rho^{\frac{1}{2}}
\end{align*}

for some $C(p,m,k_0)>0$ by Lemma \ref{est_der_nonlinear_map}.
\end{proof}

\subsection{Solution of the Bifurcation Equation}
To solve the bifurcation equation \eqref{bifeq} we first reformulate the problem as a pair of equations. The left side of \eqref{bifeq} can be calculated explicitly:
\begin{equation}
    \label{bifeqLHS}
    L_{\omega,\alpha} \rho \cos(t) \sin(x) = (1 - \omega^{2} + m) \rho \cos(t) \sin(x) - \omega \alpha \rho \sin(t) \sin(x).
\end{equation}
One can also rewrite the right side of \eqref{bifeq} using Lemma \ref{kernellemma}:
\begin{equation}
    \label{bifeqRHS}
    \begin{aligned}
    & \omega^{2p+1} \Pi_{\omega_{1},0} \Big ( \partial_{t}[\rho \cos(t) \sin(x) + v] \Big )^{2p+1}\\
    =& A(\rho, \omega, \alpha) \sin(t) \sin(x) + B(\rho, \omega, \alpha) \cos(t) \sin(x),
    \end{aligned}
\end{equation}
where
\begin{equation*}
    A(\rho, \omega, \alpha) = \omega^{2p+1} \int_{\T^{2}} \Big ( \partial_{t}[\rho \cos(t) \sin(x) + v] \Big )^{2p+1} \sin(t) \sin(x) dt dx,
\end{equation*}
and
\begin{equation*}
    B(\rho, \omega, \alpha) = \omega^{2p+1} \int_{\T^{2}} \Big ( \partial_{t}[\rho \cos(t) \sin(x) + v] \Big )^{2p+1} \cos(t) \sin(x) dt dx.
\end{equation*}
Equating coefficients in \eqref{bifeqLHS} and \eqref{bifeqRHS} yields a pair of equations:
\begin{align}
    -\omega \alpha \rho &= \omega^{2p+1} \int_{\T^{2}} \Big ( \partial_{t}[\rho \cos(t) \sin(x) + v] \Big )^{2p+1} \sin(t) \sin(x) dt dx \label{bifeq1}\\
    (m + 1 - \omega^{2}) \rho &= \omega^{2p+1} \int_{\T^{2}} \Big ( \partial_{t}[\rho \cos(t) \sin(x) + v] \Big )^{2p+1} \cos(t) \sin(x) dt dx. \label{bifeq2}
\end{align}
We divide both of these equations by $\rho$; we also divide \eqref{bifeq1} by $\omega$:
\begin{align}
    - \alpha  &= \frac{\omega^{2p}}{\rho} \int_{\T^{2}} \Big ( \partial_{t}[\rho \cos(t) \sin(x) + v] \Big )^{2p+1} \sin(t) \sin(x) dt dx \label{bifeq3}\\
    (m + 1 - \omega^{2}) &= \frac{\omega^{2p+1}}{\rho} \int_{\T^{2}} \Big ( \partial_{t}[\rho \cos(t) \sin(x) + v] \Big )^{2p+1} \cos(t) \sin(x) dt dx. \label{bifeq4}
\end{align}

Before proceeding further we observe that
\begin{equation*}
    \Big ( \partial_{t}[\rho \cos(t) \sin(x) + v] \Big )^{2p+1} = -\rho^{2p+1} \sin^{2p+1}(t) \sin^{2p+1}(x) + F(\rho, t,x),
\end{equation*}
where
\begin{equation}
    \label{Fdefinition}
    F(\rho, t, x) = \sum_{j=1}^{2p+1} \binom{2p+1}{j} \Big ( -\rho \sin(t) \sin(x) \Big )^{2p+1 - j} (v_{t})^{j}.
\end{equation}
Returning to \eqref{bifeq3} and \eqref{bifeq4} we have
\begin{align*}
    -\alpha &= -\omega^{2p} \rho^{2p} \int_{\T^{2}} \sin^{2p+2}(t) \sin^{2p+2}(x) dt dx \\
    & \quad + \frac{\omega^{2p}}{\rho} \int_{\T^{2}} F(\rho,t,x) \sin(t) \sin(x) dt dx,
    \intertext{and}
    m + 1 - \omega^{2} &= -\omega^{2p+1} \rho^{2p} \int_{\T^{2}} \sin^{2p+1}(t) \cos(t) \sin^{2p+2}(x) dt dx \\
    & \quad + \frac{\omega^{2p+1}}{\rho} \int_{\T^{2}} F(\rho,t,x) \cos(t) \sin(x) dt dx.
\end{align*}
Observe that
\begin{equation*}
    \int_{\T^{2}} \sin^{2p+1}(t) \cos(t) \sin^{2p+2}(x) dt dx = 0
\end{equation*}
and let
\begin{equation*}
    \theta = \theta(p) = \int_{\T^{2}} \sin^{2p+2}(t) \sin^{2p+2}(x) dt dx = \left ( \binom{2p+1}{p} \frac{\pi}{2^{2p}} \right )^{2}.
\end{equation*}
It now follows that the bifurcation equations can be written as
\begin{align}
    \alpha &= \omega^{2p} \theta \rho^{2p} - \frac{\omega^{2p}}{\rho} \int_{\T^{2}} F(\rho,t,x) \sin(t) \sin(x) dt dx \label{be1}\\
    m + 1 - \omega^{2} &= \frac{\omega^{2p+1}}{\rho} \int_{\T^{2}} F(\rho,t,x) \cos(t) \sin(x) dt dx. \label{be2}
\end{align}

We take the solution $v(\rho, \omega, \alpha)$ granted by Proposition \ref{good_re_sol} and substitute it into the bifurcation equations \eqref{be1}, \eqref{be2} and note that the dependence on $v$ is through $F$.

\subsubsection{Estimates on $F$}
It will be worthwhile to have the following estimates related to the function $F$ defined in \eqref{Fdefinition}:

\begin{lemma}\label{Fest}
Let $v = v(\rho, \omega, \alpha)$ denote the solution of \eqref{rangeeq_g} obtained in Proposition \ref{good_re_sol}. We have that
\begin{equation*} 
\frac{\vert F(\rho,t,x)\vert }{\rho}\lesssim \rho^{2p+1}
\end{equation*}
where the implicit constant  only depends on $k_0,m,p$.
\end{lemma}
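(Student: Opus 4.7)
The plan is to bound $F(\rho,t,x)$ pointwise by combining the Sobolev embedding afforded by Assumption~\ref{Assumption3} with the size estimate on the range-equation solution from Proposition~\ref{good_re_sol}. The key observation is that every term in the sum defining $F$ contains at least one factor of $\partial_t v$, and since $\|v\|_{X^s} \le \rho^{2p+\frac{1}{2}}$ is already small compared to $\rho$, each such factor gains extra powers of $\rho$.

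First, I would record the pointwise bound on $\partial_t v$. Since Assumption~\ref{Assumption3} gives $s \ge k_0 + 10 \ge 12$, we have $s-1 > 1$, and so by the Sobolev embedding $H^{s-1}(\T^2) \hookrightarrow C^0(\T^2)$ recalled in Section~\ref{cmpt_emb},
\begin{equation*}
\|\partial_t v\|_{C^0(\T^2)} \lesssim \|\partial_t v\|_{X^{s-1}} \lesssim \|v\|_{X^s} \le \rho^{2p+\frac{1}{2}},
\end{equation*}
where the implicit constants depend only on $k_0,m,p$ (via $s$ and the embedding).

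Second, I would apply the triangle inequality term by term in the definition \eqref{Fdefinition}, using $|\sin(t)\sin(x)| \le 1$:
\begin{equation*}
|F(\rho,t,x)| \le \sum_{j=1}^{2p+1} \binom{2p+1}{j} \rho^{2p+1-j} \|\partial_t v\|_{C^0}^{j} \lesssim_{p} \sum_{j=1}^{2p+1} \rho^{2p+1-j}\,\rho^{j(2p+\frac{1}{2})}.
\end{equation*}
The exponent of $\rho$ in the $j$-th term equals $2p+1 + j(2p - \tfrac{1}{2})$, which, since $p \ge 1$ so that $2p - \tfrac{1}{2} > 0$, is minimized at $j=1$, yielding the worst exponent $4p + \tfrac{1}{2}$. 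Therefore
\begin{equation*}
|F(\rho,t,x)| \lesssim_{k_0,m,p} \rho^{4p+\frac{1}{2}}.
\end{equation*}

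Finally, dividing by $\rho$ gives $|F(\rho,t,x)|/\rho \lesssim \rho^{4p-\frac{1}{2}}$, and since $4p - \tfrac{1}{2} \ge 2p+1$ whenever $p \ge 1$ and $\rho \in (0,1)$, we conclude $|F(\rho,t,x)|/\rho \lesssim \rho^{2p+1}$ with implicit constant depending only on $k_0,m,p$. I do not expect any real obstacle here: the argument is a clean combination of Sobolev embedding, the Banach algebra / binomial expansion, and the size of $v$ from Proposition~\ref{good_re_sol}; the only minor bookkeeping is tracking which index $j$ governs the slowest decay, which is trivially $j=1$.
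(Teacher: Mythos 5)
Your proof is correct, and it takes a mildly different route from the paper's. The paper first coarsens the estimate on $v$ to $\|v\|_{V^s}\le \rho^{2p+\frac12}\le\rho^2$ (valid for $p\ge 1$, $\rho\in(0,1)$) and then invokes the Banach algebra property of $X^{s-1}$ to bound $\|F\|_{V^{s-1}}\lesssim\sum_{j\ge 1}\binom{2p+1}{j}\rho^{2p+1-j}(\rho^2)^j\lesssim\rho^{2p+2}$, which yields the stated bound (pointwise control then follows from Sobolev embedding, which is implicitly how the lemma is applied). You instead apply Sobolev embedding up front to get a $C^0$ bound on $\partial_t v$ directly from Proposition~\ref{good_re_sol}, and then estimate $F$ pointwise term by term using $|\sin t\sin x|\le 1$. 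Working with the sharper input $\|v\|\le\rho^{2p+\frac12}$ you obtain the stronger conclusion $|F|/\rho\lesssim\rho^{4p-\frac12}$, which comfortably dominates $\rho^{2p+1}$ when $p\ge 1$ and $\rho\in(0,1)$. Both arguments are one-line binomial estimates after the key bound on $v$; your version avoids quoting the Banach algebra inequality, and the paper's version has the minor advantage of producing a Sobolev-norm estimate on $F$, which is the form that is actually reused in the later bound on $\partial_\alpha\mathcal{B}$ in Proposition~\ref{bes_sol}. Either way the dependence of the constants on $k_0,m,p$ (via $s$ from Assumption~\ref{Assumption3}) is correctly tracked.
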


\begin{proof}
Using Proposition \ref{good_re_sol} we have that $\|v\|_{V^{s}} \le \rho^{2p+\frac{1}{2}}\le \rho^2$

hence 
\begin{align*}
\left \| F(\rho, t, x) \right \|_{V^{s-1}} & = \left \| \sum_{j=1}^{2p+1} \binom{2p+1}{j} ( -\rho \sin(t) \sin(x) )^{2p+1-j} (v_{t})^{j} \right \|_{V^{s-1}}\\
& \lesssim \sum_{j=1}^{2p+1} \binom{2p+1}{j}\rho^{2p+1-j}({\rho^2})^j = \sum_{j=1}^{2p+1} \binom{2p+1}{j}\rho^{2p+1+j} \\
& \lesssim \rho^{2p+2}.
\end{align*}
Hence, 
\begin{equation*}
    \frac{\| F(\rho,t,x)\|_{V^{s-1}}}{\rho} \lesssim \rho^{2p+1}
\end{equation*}
and the implicit constant  depends only  on $k_0,p,m$.
\end{proof}

We are now equipped to solve the bifurcation equations \eqref{be1} and \eqref{be2}.

\begin{proposition}
\label{bes_sol}
Let $v = v(\rho, \omega, \alpha)$ denote the solution of \eqref{rangeeq_g} obtained in Proposition \ref{good_re_sol}. Let $\rho > 0$ be sufficiently small; in particular we require that $\rho$ satisfy the inequality \eqref{rho_cond2} below. Then the bifurcation equation \eqref{be1} has a solution $\alpha(\omega, \rho) \gtrsim \rho^{2p} > 0$, where the implicit constant depends only on $k_0,m,p$. Furthermore, for fixed $\rho > 0$ sufficiently small, the map
\begin{equation*}
    [W_{0}, W_{1}] \ni \omega \mapsto \alpha(\omega, \rho) \in (0,\infty)
\end{equation*}
is continuous. The bifurcation equation \eqref{be2} has a solution $\omega(\rho) \in [W_{0}, W_{1}]$. Finally, we have that $\alpha \downarrow 0$ and $\omega^{2} \to \omega_{1}^{2} = m+1$ as $\rho \downarrow 0$.
\end{proposition}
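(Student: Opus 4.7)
The plan is to solve the scalar bifurcation equations \eqref{be1} and \eqref{be2} sequentially. Holding $(\rho,\omega) \in (0,M) \times [W_{0},W_{1}]$ fixed, I first view \eqref{be1} as a scalar fixed-point equation in $\alpha$, with the range-equation solution $v(\rho,\omega,\alpha)$ supplied by Proposition \ref{good_re_sol}. I then substitute the resulting $\alpha = \alpha(\omega,\rho)$ into \eqref{be2} and solve for $\omega = \omega(\rho) \in [W_{0},W_{1}]$ via the intermediate value theorem.

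\textbf{Solution of \eqref{be1}.} Define
$$
\Phi(\alpha) := \omega^{2p}\theta\rho^{2p} - \frac{\omega^{2p}}{\rho}\int_{\T^{2}} F\bigl(\rho,t,x;v(\rho,\omega,\alpha)\bigr)\sin(t)\sin(x)\,dt\,dx
$$
on the interval $I := [W_{0}^{2p}\theta\rho^{2p}/4,\ 2W_{1}^{2p}\theta\rho^{2p}]$, which lies in the regime where Assumption \ref{Assumption2} is satisfied so that all prior estimates remain available. Lemma \ref{Fest} gives that the integral correction is $O(\rho^{2p+1})$ uniformly in $\alpha\in I$, so for $\rho$ sufficiently small $\Phi$ is a self-map of $I$. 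For contraction, chain-rule differentiation yields $\partial_{\alpha} \Phi = -\frac{\omega^{2p}}{\rho}\int_{\T^{2}}\partial_{\alpha} F \cdot \sin(t)\sin(x)\,dt\,dx$, with $\partial_{\alpha} F$ linear in $\partial_{\alpha} v$. Combining $\|v\|_{V^{s}}\lesssim \rho^{2p+1/2}$ from Proposition \ref{good_re_sol} and $\|\partial_{\alpha} v\|_{V^{s}}\lesssim \rho^{1/2}$ from Proposition \ref{est_der_rangeeq}, together with the Banach algebra estimate \eqref{algebra}, one obtains $\|\partial_{\alpha} F\|_{V^{s-1}}\lesssim \rho^{2p+1/2}$, and therefore $|\partial_{\alpha} \Phi|\lesssim \rho^{2p-1/2}$. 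Thus $\Phi$ is a strict contraction on $I$ for $\rho$ small enough, and the Banach fixed point theorem produces a unique $\alpha(\omega,\rho) \in I$ with $\alpha \gtrsim_{m,p,k_{0}}\rho^{2p}$. Continuity of $\omega\mapsto \alpha(\omega,\rho)$ then follows from the parameter-dependent contraction mapping principle, after noting that $\omega \mapsto v(\rho,\omega,\alpha)$ is continuous by an argument parallel to Proposition \ref{est_der_rangeeq}.

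\textbf{Solution of \eqref{be2} and limits.} Substituting $\alpha = \alpha(\omega,\rho)$, the right-hand side of \eqref{be2} becomes a continuous function of $\omega$ bounded by $C(p,m,k_{0})\rho^{2p+1}$, again by Lemma \ref{Fest}. Let $g(\omega) := (m+1-\omega^{2}) - \text{RHS}(\omega)$. Assumption \ref{Assumption1} gives $m+1-W_{0}^{2} > 0$ and $W_{1}^{2}-(m+1) > 0$, so the smallness condition $C(p,m,k_{0})\rho^{2p+1} < \min\{m+1-W_{0}^{2},\ W_{1}^{2}-m-1\}$ (which I take to be \eqref{rho_cond2}) ensures $g(W_{0}) > 0 > g(W_{1})$. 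The intermediate value theorem then yields $\omega(\rho)\in(W_{0},W_{1})$ with $g(\omega(\rho)) = 0$. The same bound forces $|m+1-\omega(\rho)^{2}|\lesssim \rho^{2p+1}$, hence $\omega^{2}\to m+1$ as $\rho\downarrow 0$, while $\alpha \le 2W_{1}^{2p}\theta\rho^{2p}$ gives $\alpha \downarrow 0$.

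\textbf{Main obstacle.} The decisive subtlety is the implicit $\alpha$-dependence of $v$ in \eqref{be1}: the operator $\partial_{\alpha} L_{\omega,\alpha}^{-1}$ is only controlled by $\sim 1/\rho^{4p}$ (Lemma \ref{est_der_nonlinear_map}), which naively would overwhelm every other power of $\rho$. What rescues the contraction is the interplay of the vanishing integral in Lemma \ref{vanish} with the $(2p+1)$-st power structure of the nonlinearity, as distilled into Proposition \ref{est_der_rangeeq}; this converts $1/\rho^{4p}$ into the harmless bound $\rho^{2p-1/2}$ on $|\partial_{\alpha}\Phi|$ and thereby closes the whole scheme.
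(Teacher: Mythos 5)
Your proof is correct and follows essentially the same route as the paper's. You set up \eqref{be1} as a scalar fixed-point equation on an interval $I$ of order $\rho^{2p}$, establish the self-map via Lemma \ref{Fest}, obtain the contraction from the $|\partial_{\alpha}\Phi|\lesssim\rho^{2p-1/2}$ estimate powered by Proposition \ref{est_der_rangeeq}, and then dispatch \eqref{be2} by a one-dimensional topological argument after substituting $\alpha(\omega,\rho)$ --- the paper uses Brouwer on the self-map $\mathcal{G}:[W_{0}^{2},W_{1}^{2}]\to[W_{0}^{2},W_{1}^{2}]$ in the variable $\xi=\omega^{2}$, while you use the intermediate value theorem on the sign change of $g$, but these are equivalent in one dimension, and your ``main obstacle'' paragraph correctly identifies that the crucial step is converting the a priori $1/\rho^{4p}$ blow-up of $\partial_{\alpha}L_{\omega,\alpha}^{-1}$ into the benign $\rho^{2p-1/2}$ bound via the vanishing-integral structure. (One small bookkeeping slip: you reassign the label \eqref{rho_cond2} to your smallness condition for \eqref{be2}, whereas in the paper \eqref{rho_cond2} is the self-map condition for $\mathcal{B}$, with separate unlabeled conditions \eqref{rho_cond3} governing \eqref{be2}; this is cosmetic and does not affect the argument.)
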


\begin{proof}
Beginning with \eqref{be1} we let
\begin{equation*}
    \mathcal{B}(\alpha) := \omega^{2p} \theta \rho^{2p} - \frac{\omega^{2p}}{\rho} \int_{\T^{2}} F(\rho,t,x) \sin(t) \sin(x) dt dx.
\end{equation*}
Using the fact that $\theta > 0$ along with Lemma \ref{Fest} we have that
\begin{equation*}
    \mathcal{B}(\alpha) \leq \vert \mathcal{B}(\alpha) \vert \le W_{1}^{2p} \theta \rho^{2p} + C_{k_0,m,p}W_{1}^{2p} \rho^{2p+1}.
\end{equation*}
On the other hand,
\begin{align*} 
    \mathcal{B}(\alpha) &\geq W_{0}^{2p} \theta \rho^{2p} - C_{k_0,m,p} W_{1}^{2p} \rho^{2p+1}\\
    &= \rho^{2p}(W_{0}^{2p} \theta - C_{k_0,m,p} W_{1}^{2p} \rho)\\
    & \geq \frac{1}{2} W_{0}^{2p} \theta \rho^{2p},
\end{align*}
where the last inequality holds provided
\begin{equation}
    \label{rho_cond2}
    0 < \rho \leq \frac{1}{2 C_{k_0,m,p}} \left ( \frac{W_{0}}{W_{1}} \right )^{2p} \theta.
\end{equation}
Hence provided $\rho$ satisfies \eqref{rho_cond2}, we see that
\begin{equation*}
    \mathcal{B}(\alpha) \in I = \left [ \frac{1}{2} W_{0}^{2p} \theta \rho^{2p}, W_{1}^{2p} \theta \rho^{2p} + C_{k_0,m,p}W_{1}^{2p} \rho^{2p+1} \right ],
\end{equation*}
for some constant $C_{k_0,m,p} > 0$ depending only on $k_0,m,p$. We conclude that $\mathcal{B}(I) \subset I$. 

\begin{remark}
    Assumption \ref{Assumption2} implies that the domain of $\mathcal{B}$ is $[\frac{1}{4}W_0^{2p}\theta \rho^{2p}, \infty)\supset I$.  Had we changed the exponent in $\rho$ in Assumption \ref{Assumption2}, we would not necessarily be able to conclude that $I$ is contained in the domain of $\mathcal{B}$ and hence would not get a fixed point from these estimates. 
\end{remark}

We will show that actually $\mathcal{B}$ is a contraction on $I$. To see this by Proposition \ref{est_der_rangeeq} we have (using that in particular $\| v(\alpha)\|_{V^s}\le \rho^2$)
\begin{align*}
    & \left \vert \partial_{\alpha}\mathcal{B(\alpha)} \right \vert \lesssim \frac{1}{\rho}\sum_{j=1}^{2p+1} \binom{2p+1}{j} \left \| ( -\rho \sin(t) \sin(x) )^{2p+1-j} j(v(\alpha)_t)^{j-1}\partial_{\alpha}v(\alpha)_{t} \right \|_{V^{s-1}} \\
& \lesssim \frac{1}{\rho}\sum_{j=1}^{2p+1} \binom{2p+1}{j} \rho^{2p+1-j} j(\rho^2)^{j-1}\rho^\frac{1}{2} \lesssim \rho^{2p-\frac{1}{2}},
\end{align*}
so it follows that $\left \vert \partial_{\alpha}\mathcal{B(\alpha)} \right \vert$ can be made smaller than $\frac{1}{2}$ as long as $\rho\in (0,1)$ is small enough depending only on $p,m,k_0$ meaning $\mathcal{B}$ is a contraction. This grants us a unique fixed point for $\mathcal{B}$ on $I$.
Finally we insert the solution $\alpha\in I$ of \eqref{be1} into the bifurcation equation \eqref{be2}. For convenience we will solve the latter for $\omega^2$ so we define 
\begin{equation*} 
\mathcal{G}(\xi):=m + 1-\frac{\xi^\frac{2p+1}{2}}{\rho} \int_{\T^{2}} F(\rho,t,x) \cos(t) \sin(x) dt dx,
\end{equation*}
where $\xi =\omega^2$. As above we note that if $\xi\in [W_0^2,W_1^2]$ then using \ref{Fest}
\begin{equation*}
\mathcal{G}(\xi)\le \vert \mathcal{G}(\xi) \vert \le m+1+D_{k_0,m,p}W_1^{2p+1}\rho^{2p+1}\le W_1^{2}
\end{equation*}

if 
\begin{equation}\label{rho_cond3}
0< \rho^{2p+1} \lesssim \frac{W_1^{2}-(m+1)}{W_1^{2p+1}},
\end{equation}
where we note that the right hand side of the above inequality is positive since $W_1^2>m+1$.
Furthermore, 
\begin{equation*}
\mathcal{G}(\xi)  \ge m+1-D_{k_0,m,p}W_1^{2p+1}\rho^{2p+1}\ge W_0^2
\end{equation*}
if
\begin{equation}\label{rho_cond3}
0< \rho^{2p+1} \le \frac{m+1-W_0^2}{D_{k_0,m,p}W_1^{2p+1}},
\end{equation}
for some constant $D_{k_0,m,p}>0$. Note again that the right hand side above is positive since $m+1>W_0^2$. The continuity of $\mathcal{G}$ on $[W_0^2,W_1^2]$ follows from the fact that $\alpha(\rho,\omega)$ is continuous in $\omega$, which is a consequence of the fact that $\mathcal{B}$ is a contraction (see Corollary 5.2.2 of \cite{hamilton1982nash}), and we conclude the existence a fixed point of $\mathcal{G}$ and hence a solution $\omega(\rho) = \sqrt{\xi}\in [W_0,W_1]$ of \eqref{be2} by the Brouwer fixed point theorem.

The final statement concerning the limits is clear for the bounds we obtained here and their dependence on $\rho$.
\end{proof}

This completes the proof of Theorem \ref{aux_maintheorem} and hence also Theorem \ref{maintheorem}. We note that the statement concerning $C^{k_0}$ regularity of the solutions follows from Sobolev embedding since, in particular, $s >k_0+1$. 

\section{Acknowledgments}

The authors would like to thank the anonymous referee for a careful reading and helpful tips to improve the exposition.

\bibliographystyle{plain}
\bibliography{reflist}

\end{document}